\documentclass{amsart}

\usepackage{amssymb}
\usepackage{amsmath}
\usepackage{amsthm}
\usepackage{amsfonts}

\usepackage{a4wide}
\usepackage{longtable}
\usepackage{multirow}
\usepackage{setspace}

\newtheorem{theorem}{Theorem}[section]
\newtheorem{proposition}{Proposition}[section]
\newtheorem{corollary}{Corollary}[section]

\theoremstyle{definition}
\newtheorem{definition}{Definition}[section]
\newtheorem{remark}{Remark}[section]
\newtheorem{example}{Example}[section]

\numberwithin{equation}{section}


\begin{document}

\title{Three stage towers of \(5\)-class fields}

\author{Daniel C. Mayer}
\address{Naglergasse 53\\8010 Graz\\Austria}
\email{algebraic.number.theory@algebra.at}
\urladdr{http://www.algebra.at}
\thanks{Research supported by the Austrian Science Fund (FWF): P 26008-N25}

\subjclass[2000]{Primary 11R37, 11R29, 11R11, 11R20; Secondary 20D15, 20--04}
\keywords{\(5\)-class towers, \(5\)-class groups, \(5\)-capitulation, real quadratic fields, dihedral fields of degree \(10\);
finite \(5\)-groups with two generators, descendant tree, \(p\)-group generation algorithm,
nuclear rank, bifurcation, \(p\)-multiplicator rank, relation rank, generator rank, Shafarevich cover,
Artin transfers, partial order of Artin patterns}

\date{April 23, 2016}

\begin{abstract}
With \(K=\mathbb{Q}\left(\sqrt{3\,812\,377}\right)\)
we give the first example of an algebraic number field
possessing a \(5\)-class tower of exact length \(\ell_5{K}=3\).
The rigorous proof is conducted by means of the \(p\)-group generation algorithm,
showing the existence of a unique finite metabelian \(5\)-group \(\mathfrak{G}\)
with abelianization \(\lbrack 5,5\rbrack\)
having the kernels \((M_1,\mathfrak{G}^5)\) and targets \(\left(\lbrack 25,5,5,5\rbrack,\lbrack 5,5\rbrack^5\right)\)
of Artin transfers \(T_i:\,\mathfrak{G}\to M_i/M_i^\prime\) to its six maximal subgroups \(M_i\),
prescribed by arithmetical invariants of \(K\).
Thus, \(\mathfrak{G}\) must be the second \(5\)-class group \(\mathrm{G}_5^2{K}\) of the real quadratic field \(K\)
but cannot be its \(5\)-class tower group \(\mathrm{G}_5^\infty{K}\),
since the relation rank \(d_2\mathfrak{G}=4\) is too big.
We provide evidence of exactly five non-isomorphic extensions \(G\) of \(\mathfrak{G}\)
having the required relation rank \(d_2 G=3\) and derived length \(\mathrm{dl}(G)=3\)
whose metabelianization \(G/G^{\prime\prime}\) is isomorphic to \(\mathfrak{G}\).
Consequently, \(\mathrm{G}_5^\infty{K}\) must be one of the five non-metabelian groups \(G\). 
\end{abstract}

\maketitle



\section{Introduction}
\label{s:Intro}

Given a prime number \(p\ge 2\),
a positive integer \(n\ge 1\),
and an algebraic number field \(K\),
the \(n\)th \textit{Hilbert \(p\)-class field} \(\mathrm{F}_p^n{K}\) of \(K\) is
the maximal unramified Galois extension of \(K\) whose automorphism group
\(\mathrm{G}_p^n{K}:=\mathrm{Gal}\left(\mathrm{F}_p^n{K}\vert K\right)\)
is a finite \(p\)-group of derived length \(\mathrm{dl}(\mathrm{G}_p^n{K})\le n\).
In particular, the first Hilbert \(p\)-class field \(\mathrm{F}_p^1{K}\) of \(K\)
is the maximal abelian unramified \(p\)-extension of \(K\).
Its Galois group \(\mathrm{G}_p^1{K}\) is isomorphic to the \(p\)-class group
\(\mathrm{Cl}_p{K}:=\mathrm{Syl}_p\mathrm{Cl}(K)\) of \(K\),
according to Artin's reciprocity law of class field theory
\cite{Ar1}.

The projective limit \(\varprojlim_{n\ge 1}\mathrm{G}_p^n{K}\)
of all \(n\)th \(p\)-\textit{class groups} \(\mathrm{G}_p^n{K}\) of \(K\)
is isomorphic to the possibly infinite topological Galois group \(G:=\mathrm{G}_p^\infty{K}\)
of the maximal unramified pro-\(p\) extension
\(\mathrm{F}_p^\infty{K}:=\bigcup_{n\ge 1}\,\mathrm{F}_p^n{K}\),
that is the \textit{Hilbert \(p\)-class tower}, of \(K\).
Conversely, the \(n\)th \(p\)-class group can be obtained as the \(n\)th derived quotient
\(G/G^{(n)}\simeq\mathrm{G}_p^n{K}\) of the \(p\)-\textit{class tower group} \(G\) of \(K\).

The \textit{length} of the \(p\)-class tower of \(K\) is defined as the derived length of \(G\),

\begin{equation}
\label{eqn:TowerLength}
\ell_p{K}:=\mathrm{dl}\left(\mathrm{G}_p^\infty{K}\right).
\end{equation}

For each \(n\ge 2\), inclusively \(n=\infty\), the commutator subgroup of \(\mathrm{G}_p^n{K}\)
is isomorphic to \(\mathrm{Gal}\left(\mathrm{F}_p^n{K}\vert\mathrm{F}_p^1{K}\right)\), and the abelianization of \(\mathrm{G}_p^n{K}\)
is isomorphic to the \(p\)-class group \(\mathrm{Cl}_p{K}\) of \(K\),

\begin{equation}
\label{eqn:GeneratorRank}
\mathrm{G}_p^n{K}/\left(\mathrm{G}_p^n{K}\right)^\prime\simeq\mathrm{G}_p^1{K}\simeq\mathrm{Cl}_p{K},
\end{equation}

\noindent
in particular, the generator rank
\(d_1\left(\mathrm{G}_p^n{K}\right):=\dim_{\mathbb{F}_p}\mathrm{H}^1(\mathrm{G}_p^n{K},\mathbb{F}_p)\)
of \(\mathrm{G}_p^n{K}\) coincides with the \(p\)-class rank
\(r_p{K}:=\dim_{\mathbb{F}_p}\left(\mathrm{Cl}_p{K}\bigotimes_{\mathbb{Z}_p}\mathbb{F}_p\right)\)
of \(K\), according to Burnside's basis theorem.


After the preceding clarification of the technical framework for this article,
we devote a minimum of space to an outline of the historical evolution of finding \(p\)-class towers with increasing finite length.
Thereby, we use the notation of the SmallGroups database
\cite{BEO1},
\cite{BEO2},
and partially give more details than the original papers of the cited authors,

We have \(\ell_p{K}=0\) if and only if the class number \(\#\mathrm{Cl}(K)\) of the field \(K\) is not divisible by \(p\).

If the \(p\)-class group \(\mathrm{Cl}_p{K}>1\) is cyclic, then we have a single stage tower with \(\ell_p{K}=1\),
by Formula
\eqref{eqn:GeneratorRank}.
If \(K=\mathbb{Q}\left(\sqrt{d}\right)\) is a quadratic field and \(p\ge 3\) is odd, the converse is also true
\cite[Thm. 4.1, (1), p. 486]{Ma1}.
However, for the discriminant \(d=-84=-2^2\cdot 3\cdot 7\),
we have \(\mathrm{Cl}_2{K}\simeq\lbrack 2,2\rbrack\) but nevertheless only \(\ell_2{K}=1\)
\cite[(ii), p. 277]{Ki},
\cite[\S\ 9, pp. 501--503]{Ma1}.

In \(1930\), Hasse
\cite[\S\ 27, pp. 173--174]{Ha2}
gave the first example of \(\ell_2{K}\ge 2\) for a field of type \(\lbrack 4,2\rbrack\),
due to private communications by Furtw\"angler and Artin.
It has discriminant \(d=-260=-2^2\cdot 5\cdot 13\), \(\mathrm{G}_2^2{K}\simeq\langle 2^4,6\rangle\),
and in fact \(\ell_2{K}=2\).
A field of type \(\lbrack 2,2\rbrack\) with \(\ell_2{K}=2\) is given by
\(d=-120=-2^3\cdot 3\cdot 5\) with \(\left(\frac{5}{3}\right)=-1\),
and has \(\mathrm{G}_2^2{K}\simeq\langle 2^3,4\rangle\) the quaternion group
\cite[(vi.a,b), p. 278]{Ki}.
However, a field of type \(\lbrack 2,2\rbrack\) can never have a \(2\)-class tower of length bigger than two.

Consequently, the first field with \(\ell_2{K}=3\) was of type \(\lbrack 4,2\rbrack\).
It was discovered in \(2003\) by Bush
\cite[Prop. 2, p. 321]{Bu}
and has discriminant \(d=-1\,780=-2^2\cdot 5\cdot 89\).
Its \(2\)-tower group is one of two candidates
\(\langle 2^8,426\vert 427\rangle\)
with class \(5\) and coclass \(3\).
In \(2006\), Nover provided evidence of \(\ell_2{K}=3\) for a field of type \(\lbrack 2,2,2\rbrack\)
with \(d=-3\,135=-3\cdot 5\cdot 11\cdot 19\)
and four candidates for \(\mathrm{G}_2^3{K}\) of order \(2^{13}\), class \(7\) and coclass \(6\)
\cite[\S\ 3.4, pp. 7--8]{BoNo},
\cite[Prop. 5, p. 239]{No}.

Now we turn to odd primes \(p\ge 3\).
In \(1934\), Scholz and Taussky
\cite[\S\ 3, pp. 39--41]{SoTa}
manually calculated the first occurrences of \(\ell_3{K}=2\)
for \(d=-4\,027\) of type \(\lbrack 3,3\rbrack\),
resp. \(d=-3\,299\) of type \(\lbrack 9,3\rbrack\).
The corresponding \(3\)-class tower groups are the Schur \(\sigma\)-groups
\(\langle 3^5,5\rangle\), resp. one of the two candidates \(\langle 3^6,14\vert 15\rangle\).
However, they also claimed a two stage tower for \(d=-9\,748=2^2\cdot 2\,437\)
\cite[\S\ 3, p. 41]{SoTa},
where neither of the two candidates \(\langle 3^7,302\vert 306\rangle\)
for the second \(3\)-class group is a Schur \(\sigma\)-group
\cite{KoVe}.
This claim was corrected by Bush and ourselves
\cite[Cor. 4.1.1, p. 775]{BuMa}
in \(2015\). We proved the first sufficient criterion for
complex quadratic fields \(K\) with \(\ell_3{K}=3\),
and provided two candidates \(\mathrm{G}_3^3{K}\simeq\langle 3^6,54\rangle-\#2;2\vert 6\)
for \(d=-9\,748\), in the notation of the ANUPQ package
\cite{GNO}.
These were the first non-metabelian Schur \(\sigma\)-groups.
They have order \(3^8\), class \(5\), coclass \(3\), derived length \(3\),
and are discussed in more detail in
\cite[(70), p. 190]{Ma6},
\cite[Fig. 10, p. 191]{Ma6},
and
\cite[\S\S\ 6--7, p. 751--756]{Ma8}.

The first occurrences of \(\ell_5{K}=2\), resp. \(\ell_7{K}=2\), were discovered in \(2013\) by ourselves
for \(d=-11\,199\)
\cite[Tbl. 3.13, p. 450]{Ma4},
resp. \(d=-63\,499\)
\cite[Tbl. 3.14, p. 450]{Ma4}.
The corresponding groups \(\mathrm{G}_5^2{K}\simeq\langle 5^5,12\rangle\), resp.
\(\mathrm{G}_7^2{K}\simeq\langle 7^5,14\vert 16\rangle\) with two candidates,
are metabelian Schur \(\sigma\)-groups.
Our attempt to prove \(\ell_5{K}=3\) for \(d=-62\,632\)
with two candidates \(\mathrm{G}_5^2{K}\simeq\langle 5^6,564\rangle-\#1;2\vert 3\),
resp. \(d=-67\,063\)
with six candidates \(\mathrm{G}_5^2{K}\simeq\langle 5^6,564\rangle-\#1;8\vert 9\vert 11\vert 12\vert 14\vert 15\),
was not successful, since we could not find suitable Schur \(\sigma\)-groups \(\mathrm{G}_5^3{K}\).

Little progress was achieved for real quadratic fields
until we developed new techniques for determining their \(p\)-class tower in
\cite{Ma7}.
We found \(\ell_3{K}=2\) for \(d=422\,573\) with \(\mathrm{G}_3^2{K}\simeq\langle 3^5,5\rangle\) 
\cite[Tbl. 4, p. 498]{Ma1},
and \(\ell_5{K}=2\) for \(d=4\,954\,652\) with \(\mathrm{G}_5^2{K}\simeq\langle 5^5,12\rangle\).
With extensive computational effort we succeeded in proving \(\ell_3{K}=3\)
for \(d=957\,013\), which has \(\mathrm{G}_3^3{K}\simeq\langle 3^7,273\rangle\)
with class \(5\), coclass \(2\), and relation rank \(3\)
\cite[Exm. 6.3, pp. 306--307]{Ma7}.
However, the attempt to prove that the quintic analogue \(d=12\,562\,849\) has \(\ell_5{K}=3\)
with either of the two candidates \(\langle 5^6,680\rangle-\#1;1\vert 2\) for \(\mathrm{G}_5^3{K}\),
having order \(5^7\), class \(5\) and coclass \(2\),
failed, due to a complete exhaustion of RAM
during the construction of two successive unramified cyclic quintic extensions of the real quadratic field \(K\),
that is an extension of absolute degree \(50\).


Exploring \(p\)-groups of coclass \(1\) instead of coclass \(2\),
which would be useless for finding \(3\)-class towers of three stages,
turned out to be the crucial idea how to circumvent such hopeless number theoretic computations of high complexity
and to conduct a purely group theoretic proof
of the principal results of this article, which are the following statements, in a succinct coarse form.

\begin{theorem}
\label{thm:MainTheorem}

Let \(K\) be a real quadratic field with \(5\)-class group \(\mathrm{Cl}_5{K}\) of type \(\lbrack 5,5\rbrack\)
and denote by \(L_1,\ldots,L_6\) its six unramified cyclic quintic extensions.
If \(K\) possesses the \(5\)-capitulation type

\begin{equation}
\label{eqn:TKT}
\varkappa(K)\sim (1,0,0,0,0,0), \text{ with fixed point } 1,
\end{equation}

\noindent
in the six extensions \(L_i\), and if the \(5\)-class groups \(\mathrm{Cl}_5{L_i}\)
are given by

\begin{equation}
\label{eqn:TTT}
\tau(K)\sim\left(\lbrack 25,5,5,5\rbrack,\lbrack 5,5\rbrack,\lbrack 5,5\rbrack,\lbrack 5,5\rbrack,\lbrack 5,5\rbrack,\lbrack 5,5\rbrack\right),
\end{equation}

\noindent
then the \(5\)-class tower
\(K<\mathrm{F}_5^1{K}<\mathrm{F}_5^2{K}<\mathrm{F}_5^3{K}=\mathrm{F}_5^\infty{K}\)
of \(K\) has exact length \(\ell_5{K}=3\).

\end{theorem}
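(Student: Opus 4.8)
The plan is to pin down the tower group \(G:=\mathrm{G}_5^\infty{K}\) by its relation rank \(d_2 G\) and its derived length \(\mathrm{dl}(G)\) alone, using pure group theory rather than the intractable construction of the degree-\(50\) fields. Throughout I would exploit two facts from the framework above: by Formula \eqref{eqn:GeneratorRank} the generator rank is \(d_1 G=r_5{K}=2\), since \(\mathrm{Cl}_5{K}\simeq\lbrack 5,5\rbrack\); and the second \(5\)-class group is the derived quotient \(\mathrm{G}_5^2{K}\simeq G/G^{\prime\prime}\). The capitulation type \(\varkappa(K)\) and the transfer target type \(\tau(K)\) together form the Artin pattern, a purely group-theoretic invariant of \(G/G^{\prime\prime}\): the former records the kernels of the Artin transfers \(T_i\colon G\to M_i/M_i^\prime\) to the six maximal subgroups, the latter their targets \(M_i/M_i^\prime\simeq\mathrm{Cl}_5{L_i}\). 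Hence the hypotheses \eqref{eqn:TKT} and \eqref{eqn:TTT} translate verbatim into conditions that \(G/G^{\prime\prime}\) must satisfy as a metabelian \(5\)-group.

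First I would identify the metabelianization. Starting from the abelian root of type \(\lbrack 5,5\rbrack\), I would run the \(p\)-group generation algorithm through the pertinent coclass trees, compute the Artin transfer kernels and targets at each vertex, and keep only those metabelian descendants whose pattern matches \eqref{eqn:TKT}--\eqref{eqn:TTT}, in particular the fixed point \(1\) of the capitulation together with the single target \(\lbrack 25,5,5,5\rbrack\) against five copies of \(\lbrack 5,5\rbrack\). The expected outcome is a unique such group \(\mathfrak{G}\), whence \(\mathrm{G}_5^2{K}\simeq\mathfrak{G}\) and therefore \(G/G^{\prime\prime}\simeq\mathfrak{G}\).

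Next comes the lower bound \(\ell_5{K}\ge 3\). Were the tower of length at most \(2\), the group \(G\) would be metabelian and hence equal to \(G/G^{\prime\prime}\simeq\mathfrak{G}\). But \(K\) is real quadratic, so its torsion-free unit rank is \(1\) and \(\zeta_5\notin K\); the Shafarevich inequality then bounds the relation rank of any \(5\)-class tower group by \(d_2 G\le d_1 G+1=3\). A presentation count gives \(d_2\mathfrak{G}=4>3\), so \(\mathfrak{G}\) cannot serve as \(G\). Consequently \(\mathrm{dl}(G)\ge 3\), that is \(\ell_5{K}\ge 3\).

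The remaining, and hardest, step is the upper bound \(\ell_5{K}\le 3\). Here I would descend from the metabelian skeleton to its non-metabelian descendants, running the \(p\)-group generation algorithm while tracking the nuclear rank to handle bifurcation and pruning every branch whose relation rank exceeds the Shafarevich bound \(d_2\le 3\). The aim is to exhaust the Shafarevich cover of \(\mathfrak{G}\): all finite \(5\)-groups \(G\) with \(G/G^{\prime\prime}\simeq\mathfrak{G}\), admitting the \(\sigma\)-automorphism forced by \(\mathrm{Gal}(K\vert\mathbb{Q})\), and having \(d_2 G=3\). I anticipate exactly five such groups, each non-metabelian with \(G^{\prime\prime\prime}=1\), hence of derived length \(3\); since the true tower group necessarily lies in this cover, it has derived length \(3\) and \(\ell_5{K}=3\). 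The main obstacle is completeness of this search: one must guarantee that the descendant tree under the relation-rank filter is genuinely exhausted, so that no admissible vertex of higher derived length is missed at a bifurcation and every infinite branch is provably terminated by \(d_2>3\). Only then does ``one of five candidates'' become a theorem rather than a sample, and with it the exactness of \(\ell_5{K}=3\).
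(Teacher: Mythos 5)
Your overall architecture is the same as the paper's: translate \eqref{eqn:TKT}--\eqref{eqn:TTT} into the restricted Artin pattern of \(G/G^{\prime\prime}\) via Theorem \ref{thm:ArtinPattern}, identify the unique metabelian group \(\mathfrak{G}\simeq\langle 15625,635\rangle\) by a descendant-tree search, invoke the Shafarevich bound \(d_2{G}\le d_1{G}+r=3\) (real quadratic, \(\zeta_5\notin K\)) against \(d_2{\mathfrak{G}}=4\) to exclude a metabelian tower, and exhibit five non-metabelian candidates of derived length \(3\). In particular your lower-bound argument \(\ell_5{K}\ge 3\) is exactly the paper's Theorem \ref{thm:ShafarevichCover}, and your awareness that the non-metabelian candidates arise at a bifurcation (not as descendants of \(\mathfrak{G}\), which is a terminal vertex) is correct.

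The genuine gap is in the step you yourself single out as the hardest: completeness of the search. Your proposed termination device --- pruning every branch whose relation rank exceeds the Shafarevich bound \(d_2\le 3\) --- is not a valid tree-search criterion, because the relation rank is not monotonic under the (child, parent) relation. This fails already on the paper's own data: the vertex \(\langle 3125,30\rangle\), which is the \emph{immediate parent} of all five candidates \(\langle 78125,n\rangle\), has \(115\) immediate descendants of step size \(2\); since an elementary abelian \(5\)-group of rank \(3\) has only \(31\) subgroups of index \(25\), the \(5\)-multiplicator rank of \(\langle 3125,30\rangle\) is at least \(4\), i.e. \(d_2\ge 4>3\), whereas its children \(\langle 78125,n\rangle\) have \(d_2=3\). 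So if you apply the filter to vertices, you prune the tree before the candidates are ever generated and find nothing; if you apply it only to terminal candidates, you have no termination criterion at all, since \(\mathcal{T}(\langle 25,2\rangle)\) is infinite (this same objection applies to your search for \(\mathfrak{G}\) along the infinite coclass-\(1\) mainline). The paper closes precisely this hole with different tools: the monotony of the restricted Artin pattern (Theorem \ref{thm:Monotony}: \(\tau\) isotonic, \(\varkappa\) antitonic with respect to \(G>\pi{G}\)), the inheritance of the defect of commutativity (Proposition \ref{prp:Inheritance}, item (3)) combined with total stabilization for \(k\ge 1\), and the equality \(\mathrm{AP}(G)=\mathrm{AP}(G/G^{\prime\prime})\) (Theorem \ref{thm:RstrAPofCompleteCover}). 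These results legitimize discarding a vertex together with \emph{all} its descendants and show that every child of the surviving capable vertices \(\langle 5^6,630\rangle\) and \(\langle 5^7,360\vert 372\vert 384\rangle\) fails, so the pruned tree is finite with admissible leaves exactly \(\mathfrak{G}\) and the five candidates; the relation rank is then used exactly once, at the very end, to eliminate \(\mathfrak{G}\). Note finally that this quotient-path argument in a finite pruned tree is also what forces \(\mathrm{G}_5^\infty{K}\) to be finite; your assertion that ``the true tower group necessarily lies in this cover'' presupposes finiteness, which the relation-rank bound alone does not deliver, since infinite pro-\(5\) groups with finite metabelianization exist.
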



\begin{corollary}
\label{cor:MainTheorem}
A real quadratic field \(K\) which satisfies the assumptions in Theorem
\ref{thm:MainTheorem}, in particular the Formulas
\eqref{eqn:TKT}
and
\eqref{eqn:TTT},
has the unique second \(5\)-class group

\begin{equation}
\label{eqn:Second5ClassGroup}
\mathrm{G}_5^2{K}\simeq\langle 15625,635\rangle
\end{equation}

\noindent
with order \(5^6\), class \(5\), coclass \(1\), derived length \(2\), and relation rank \(4\),
and one of the following five candidates for the \(5\)-class tower group

\begin{equation}
\label{eqn:5TowerGroup}
\mathrm{G}_5^\infty{K}=\mathrm{G}_5^3{K}\simeq\langle 78125,n\rangle, \quad \text{ where } n\in\lbrace 361,373,374,385,386\rbrace,
\end{equation}

\noindent
with order \(5^7\), class \(5\), coclass \(2\), derived length \(3\), and relation rank \(3\).

\end{corollary}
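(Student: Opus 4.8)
The plan is to pin down the second $5$-class group $\mathfrak{G}:=\mathrm{G}_5^2{K}$ from its arithmetically prescribed Artin pattern, to exclude it as the tower group by a relation-rank obstruction, and finally to enumerate the admissible non-metabelian extensions lying above it. I would start from the structural identity $G/G^{(2)}=G/G''\simeq\mathrm{G}_5^2{K}=\mathfrak{G}$ recorded in the introduction, together with Formula~\eqref{eqn:GeneratorRank}, so that $\mathfrak{G}$ is a finite metabelian $5$-group with $\mathfrak{G}/\mathfrak{G}'\simeq\lbrack 5,5\rbrack$ and generator rank $d_1\mathfrak{G}=2$. Its six maximal subgroups $M_i$ correspond to the six unramified cyclic quintic extensions $L_i$, and the Artin transfers $T_i:\mathfrak{G}\to M_i/M_i'$ carry precisely the data fixed in the hypotheses: the transfer kernel type is the capitulation type $\varkappa(K)\sim(1,0,0,0,0,0)$ of~\eqref{eqn:TKT}, and the transfer target type is the family $\tau(K)$ of~\eqref{eqn:TTT}. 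I would then run the $p$-group generation algorithm over the descendant tree of metabelian $5$-groups with abelianization $\lbrack 5,5\rbrack$, pruning by the partial order of Artin patterns, and retain exactly those vertices whose pattern coincides with $(\varkappa(K),\tau(K))$. As announced in the abstract, this pattern is expected to isolate a single group $\mathfrak{G}\simeq\langle 15625,635\rangle$, from which the stated invariants—order $5^6$, class $5$, coclass $1$, derived length $2$, and relation rank $d_2\mathfrak{G}=4$—are read off directly.

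Next I would rule out $\mathfrak{G}$ as the tower group by the Shafarevich inequality. Since $K$ is real quadratic, its torsion-free unit rank $r_1+r_2-1$ equals $1$ and it contains no $5$th roots of unity, so the relation rank of $G:=\mathrm{G}_5^\infty{K}$ is bounded by $d_1{G}\le d_2{G}\le d_1{G}+1=3$. Because $d_2\mathfrak{G}=4$ exceeds this bound, $\mathfrak{G}$ cannot equal $G$; hence $G$ is a genuine extension of $\mathfrak{G}$ with $G/G''\simeq\mathfrak{G}$ and derived length $\mathrm{dl}(G)\ge 3$, in agreement with Theorem~\ref{thm:MainTheorem}.

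It remains to exhaust the Shafarevich cover of $\mathfrak{G}$, namely the set of finite $5$-groups $G$ with $G/G''\simeq\mathfrak{G}$ obeying $d_2{G}\le 3$. Here I would use that the Artin transfers to the maximal subgroups factor through the metabelianization, so every such $G$ automatically reproduces the prescribed pattern $(\varkappa(K),\tau(K))$ and no further arithmetic filtering is required. I would construct the relevant descendants of $\mathfrak{G}$ with the $p$-group generation algorithm, via the ANUPQ package, tracking the nuclear rank to locate capable vertices and bifurcations, and for each candidate compute the $p$-multiplicator rank, the relation rank $d_2$, the metabelianization, and the derived length. The claim is that exactly five non-isomorphic groups survive, the $\langle 78125,n\rangle$ with $n\in\lbrace 361,373,374,385,386\rbrace$, each of order $5^7$, class $5$, coclass $2$, derived length $3$, and relation rank $3$. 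Since Theorem~\ref{thm:MainTheorem} forces $\ell_5{K}=3$, the tower group $G$ has derived length $3$ and lies in this cover, whence it must be one of the five.

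The hard part will be establishing the completeness of this list: proving that no descendant of $\mathfrak{G}$, whether of order $5^7$ or higher, satisfies the constraint $d_2{G}\le 3$ while still metabelianizing back to $\mathfrak{G}$. This demands a careful traversal of the descendant tree with correct bookkeeping of terminal versus capable vertices (nuclear rank $0$) and of the bifurcation points, since an incomplete search could drop an admissible group, while a single erroneous relation-rank computation could admit a spurious one. A secondary but essential check is to confirm, for each of the five candidates separately, that its metabelianization is genuinely isomorphic to $\langle 15625,635\rangle$ rather than merely Artin-pattern equivalent to it.
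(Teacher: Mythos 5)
Your first two steps --- identifying \(\mathfrak{G}=\mathrm{G}_5^2{K}\simeq\langle 15625,635\rangle\) by a pattern search over the tree, and excluding it as tower group via the Shafarevich bound \(2=\varrho\le d_2{G}\le\varrho+r+\vartheta=3\) against \(d_2{\mathfrak{G}}=4\) --- agree with the paper (Theorems \ref{thm:ArtinPattern}, \ref{thm:Monotony}, \ref{thm:RelationRank}, \ref{thm:ShafarevichCover}). The gap is in the third step, where you enumerate the Shafarevich cover by constructing \emph{the relevant descendants of} \(\mathfrak{G}\) with ANUPQ, and where you locate the hard part in showing that no descendant of \(\mathfrak{G}\) of order \(5^7\) or higher survives. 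This conflates two different quotient operations, and the step would fail outright: the five groups \(G_n=\langle 78125,n\rangle\) are \emph{not} descendants of \(\langle 15625,635\rangle\). Tree edges are defined by the parent operator \(\pi{G_n}=G_n/\gamma_5{G_n}\), and since \(\gamma_5{G_n}=\langle s_5,u_5\rangle\) has order \(25\), the parent of each \(G_n\) is \(\langle 3125,30\rangle\) (a step-size-\(2\) edge); the metabelianization \(G_n/G_n''\simeq\langle 15625,635\rangle\) is the quotient by the \emph{different} normal subgroup \(G_n''=\langle u_5\rangle\) of order \(5\), and it is not a tree edge. One can also see this without presentations: by Proposition \ref{prp:Inheritance}, the nilpotency class increases by exactly \(1\) along every edge, so every descendant of \(\mathfrak{G}\) (which has class \(5\)) has class at least \(6\), whereas the five candidates have class \(5\). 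Consequently a descendant computation rooted at \(\mathfrak{G}\) can never return them (indeed \(\langle 15625,635\rangle\) is a leaf in Figure \ref{fig:PrunedTree5x5}), and your traversal would wrongly conclude that the Shafarevich cover is empty.

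The missing idea is the \emph{bifurcation}: the members of \(\mathrm{cov}(\mathfrak{G})\) arise as step-size-\(2\) children of the common parent \(\langle 3125,30\rangle\), i.e.\ as siblings of \(\mathfrak{G}\), not as groups below it. The paper therefore continues the search in \(\mathcal{T}(\langle 25,2\rangle)\) from the root, following \emph{both} step sizes at each bifurcation, and completeness comes from the fact you invoke but in the opposite direction: by Theorem \ref{thm:RstrAPofCompleteCover}, every element of the cover shares the Artin pattern of \(\mathfrak{G}\), hence survives the pruning and must show up in the pattern search; the search terminates by monotony (Theorem \ref{thm:Monotony}) once the remaining capable vertices \(\langle 15625,630\rangle\) and \(\langle 78125,360\vert 372\vert 384\rangle\) are checked to have only children with oversized \(\tau\) or positive defect \(k\ge 1\). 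Your secondary check that \(G_n/G_n''\simeq\langle 15625,635\rangle\), rather than merely pattern-equivalent, is then read off the explicit presentations \eqref{eqn:pcPresentations} and \eqref{eqn:RelatorWord} by setting \(u_5=1\).
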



\begin{example}
\label{exm:Realization}
The minimal fundamental discriminant \(d\) of a real quadratic field \(K=\mathbb{Q}(\sqrt{d})\)
satisfying the conditions
\eqref{eqn:TKT}
and
\eqref{eqn:TTT}
is given by

\begin{equation}
\label{eqn:Realization}
d=3\,812\,377=991\cdot 3\,847.
\end{equation}

\noindent
The next occurrence is \(d=19\,621\,905=3\cdot 5\cdot 307\cdot 4\,261\), which is currently the biggest known example,
whereas \(d=21\,281\,673=3\cdot 7\cdot 53\cdot 19\,121\) satisfies
\eqref{eqn:TTT}
but has a different \(\varkappa(K)\sim (2,0,0,0,0,0)\), without fixed point.

\end{example}


The organization of this paper is as follows.
In \S\
\ref{s:PatternRecognition},
we show how the arithmetical invariants \(\varkappa(K)\) and \(\tau(K)\)
of an algebraic number field \(K\)
(e.g., the data in Formula
\eqref{eqn:TKT}
and
\eqref{eqn:TTT})
can be translated into group theoretic information
\(\varkappa(\mathfrak{G})\) and \(\tau(\mathfrak{G})\)
on the second \(p\)-class group \(\mathfrak{G}=\mathrm{G}_p^2{K}\) of \(K\),
following the ideas indicated in Artin's paper
\cite{Ar2},
which led to Furtw\"angler's proof
\cite{Fw}
of the famous Principal Ideal Theorem
and were expanded in detail by Hasse
\cite{Ha2}.

\S\
\ref{s:MonotonyDescendantTrees}
is devoted to the study of the interplay
between the Artin pattern
\(\mathrm{AP}(G)=\left(\tau(G),\varkappa(G)\right)\)
of finite \(p\)-groups \(G\) on a descendant tree \(\mathcal{T}(R)\) with root \(R\)
and the partial order \(G>\pi{G}\) induced by the (child, parent)-pairs \((G,\pi{G})\)
of the tree, which was discovered recently in
\cite{Ma11}.
These monotony properties of \(\tau(G)\) and \(\varkappa(G)\) are employed
in the \(p\)-group generation algorithm by Newman
\cite{Nm1}
and O'Brien
\cite{Ob}
for identifying the unique metabelianization
\(\mathfrak{G}=\mathrm{G}_5^2{K}=G/G^{\prime\prime}\simeq\langle 15625,635\rangle\)
of the \(5\)-class tower group \(G=\mathrm{G}_5^\infty{K}\) of \(K\)
by the strategy of pattern recognition via Artin transfers in \S\ 
\ref{s:Proofs}.
An unexpected bifurcation in the tree \(\mathcal{T}\left(\langle 25,2\rangle\right)\) enabled the discovery
of five non-isomorphic possibilities
\(G\simeq\langle 78125,n\rangle\) with \(n\in\lbrace 361,373,374,385,386\rbrace\)
and derived length \(\mathrm{dl}(G)=3\) for \(G\) itself.

The termination condition for the algorithm
is expressed by the growth of the \(\tau(G)\)-component beyond a break-off bound.
At this stage it is still open whether the length of the \(5\)-class tower of \(K\)
is \(\ell_5{K}=3\) or \(\ell_5{K}=2\),
since only group theoretic information was used up to this point.

Therefore, 
arithmetical constraints for \(G\) in form of the relation rank,
which has been determined by Shafarevich
\cite{Sh},
come into the play in \S\S\
\ref{s:RelationRank}
and
\ref{s:ShafarevichCover},
and eliminate the possibility of a metabelian tower with length \(\ell_5{K}=2\)
for a real quadratic field \(K\).



\section{Pattern recognition via Artin transfers}
\label{s:PatternRecognition}

\subsection{Translation from number theory to group theory}
\label{ss:TranslationFieldGroup}

We start with an arithmetical situation where Artin patterns play an important role.
Let \(p\ge 2\) be a prime number.
Suppose we are given an algebraic number field \(K\vert\mathbb{Q}\)
with non-trivial \(p\)-class group \(\mathrm{Cl}_p{K}>1\).
Then the intermediate fields \(L\) between \(K\) and its Hilbert \(p\)-class field \(\mathrm{F}_p^1{K}\)
are exactly the (finitely many) abelian unramified extensions of \(K\) with degree a power of \(p\).
For each of them, we denote by \(j_{L\vert K}:\,\mathrm{Cl}_p{K}\to\mathrm{Cl}_p{L}\)
the \textit{class extension} homomorphism.

\begin{definition}
\label{dfn:FieldArtinPattern}
By the \textit{ restricted Artin pattern} of \(K\) we understand the pair consisting of
the family \(\tau(K)\) of the \(p\)-class groups
of all extensions \(K<L<\mathrm{F}_p^1{K}\) as its first component
and the \(p\)-\textit{capitulation type} \(\varkappa(K)\) as its second component,

\begin{equation}
\label{eqn:FieldArtinPattern}
\mathrm{AP}(K):=\left(\tau(K),\varkappa(K)\right),
\quad \tau(K):=\left(\mathrm{Cl}_p{L}\right)_{K<L<\mathrm{F}_p^1{K}},
\quad \varkappa(K):=\left(\ker{j_{L\vert K}}\right)_{K<L<\mathrm{F}_p^1{K}}.
\end{equation}

\end{definition}

\begin{remark}
\label{rmk:FieldArtinPattern}
It is convenient to replace the group objects in the unordered family
\(\tau(K)\), resp. \(\varkappa(K)\),
by ordered abelian type invariants, resp. ordered numerical identifiers,
accepting the drawback of dependence on the selected ordering
(indicated by using the symbol \(\sim\) instead of \(=\)).

In the case \(p=5\) and \(C:=\mathrm{Cl}_5{K}\simeq\lbrack 5,5\rbrack\), which is relevant for this paper,
let \(L_1,\ldots,L_6\) be the six intermediate fields between \(K\) and \(\mathrm{F}_5^1{K}\)
and denote by \(N_i:=\mathrm{Norm}_{L_i\vert K}\left(\mathrm{Cl}_5{L_i}\right)\) their norm class groups,
which are exactly the six cyclic subgroups of index \(5\) in \(C\).
In this sense, the notation of the \(5\)-capitulation type in Formula
\eqref{eqn:TKT}
(where \(0\) means a total capitulation) is an abbreviation for

\begin{equation}
\label{eqn:CapitulationType}
\varkappa(K)\sim\left(N_1,C,C,C,C,C\right).
\end{equation}

\end{remark}


Now we turn to the group theoretic aspect of Artin patterns.
Let \(G\) be a finite \(p\)-group
or an infinite topological pro-\(p\) group with finite abelianization \(G/G^\prime\).
For each of the (finitely many) intermediate groups \(M\)
between the commutator subgroup \(G^\prime\) and the group \(G\),
we denote by \(T_{G,M}:\,G\to M/M^\prime\) the \textit{Artin transfer} homomorphism
from \(G\) to the abelianization \(M/M^\prime\) of \(M\)
\cite[Dfn. 3.1, p. 5]{Ma6}.

\begin{definition}
\label{dfn:GroupArtinPattern}
By the \textit{ restricted Artin pattern} of \(G\) we understand the pair consisting of
the \textit{transfer target type} (TTT) \(\tau(G)\) as its first component
and the \textit{transfer kernel type} (TKT) \(\varkappa(G)\) as its second component,

\begin{equation}
\label{eqn:GroupArtinPattern}
\mathrm{AP}(G):=\left(\tau(G),\varkappa(G)\right),
\quad \tau(G):=\left(M/M^\prime\right)_{G^\prime<M<G},
\quad \varkappa(G):=\left(\ker{T_{G,M}}\right)_{G^\prime<M<G}.
\end{equation}

\end{definition}

\begin{remark}
\label{rmk:GroupArtinPattern}
Again, we usually replace the group objects in the unordered family
\(\tau(G)\), resp. \(\varkappa(G)\),
by ordered abelian type invariants, resp. ordered numerical identifiers.

In the case \(p=5\) and \(G/G^\prime\simeq\lbrack 5,5\rbrack\), which will be of concern in this paper,
we denote by \(M_1,\ldots,M_6\) the six intermediate groups between \(G^\prime\) and \(G\)
and we write the transfer kernel type corresponding to Formula
\eqref{eqn:TKT}
as

\begin{equation}
\label{eqn:TransferKernelType}
\varkappa(G)\sim\left(M_1,G,G,G,G,G\right) \quad \text{ or briefly } \varkappa(G)\sim\left(1,0,0,0,0,0\right).
\end{equation}

\end{remark}


Finally, we combine the number theoretic and group theoretic view of Artin patterns.

\begin{theorem}
\label{thm:ArtinPattern}
Let \(p\) be a prime number.
Assume that \(K\) is a number field, and let
\(\mathfrak{G}:=\mathrm{G}_p^2{K}\) be the second \(p\)-class group of \(K\).
Then \(\mathfrak{G}\) and \(K\) share a common restricted Artin pattern,

\begin{equation}
\label{eqn:ArtinPattern}
\mathrm{AP}(\mathfrak{G})=\mathrm{AP}(K), \quad \text{ that is } \quad \tau(\mathfrak{G})=\tau(K)\text{ and } \varkappa(\mathfrak{G})=\varkappa(K).
\end{equation}

\end{theorem}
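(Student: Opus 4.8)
The plan is to exploit Galois theory to match up the indexing sets of the two Artin patterns and then to transport both the transfer targets and the transfer kernels across Artin's reciprocity isomorphism. Write \(F:=\mathrm{F}_p^2{K}\) for the second Hilbert \(p\)-class field, so that \(\mathfrak{G}=\mathrm{Gal}(F\vert K)\) by definition, and recall that \(\mathfrak{G}\) has derived length at most \(2\), whence its commutator subgroup \(\mathfrak{G}^\prime=\mathrm{Gal}\left(F\vert\mathrm{F}_p^1{K}\right)\) is abelian. First I would set up the order-reversing bijection furnished by the Galois correspondence: every intermediate field \(K<L<\mathrm{F}_p^1{K}\) yields the intermediate group \(M:=\mathrm{Gal}(F\vert L)\) with \(\mathfrak{G}^\prime<M<\mathfrak{G}\), and conversely. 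This identifies the two families over which \(\tau\) and \(\varkappa\) are indexed, so it suffices to compare corresponding entries.

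For the target component \(\tau\), the key lemma is that \(\mathrm{F}_p^1{L}\subseteq F\). Since \(L\vert K\) is abelian it is Galois, so the canonically defined field \(\mathrm{F}_p^1{L}\) is Galois over \(K\); its group \(\mathrm{Gal}\left(\mathrm{F}_p^1{L}\vert K\right)\) is an extension of the abelian group \(\mathrm{Gal}(L\vert K)\) by the abelian group \(\mathrm{Gal}\left(\mathrm{F}_p^1{L}\vert L\right)\simeq\mathrm{Cl}_p{L}\) and is therefore a finite metabelian \(p\)-group, while \(\mathrm{F}_p^1{L}\vert K\) is unramified, being composed of the unramified steps \(\mathrm{F}_p^1{L}\vert L\) and \(L\vert K\). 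The maximality defining \(F=\mathrm{F}_p^2{K}\) then forces \(\mathrm{F}_p^1{L}\subseteq F\). Because \(F\vert L\) is unramified, every abelian subextension of \(F\vert L\) lies in the maximal such field \(\mathrm{F}_p^1{L}\), so that \(\mathrm{F}_p^1{L}\) is precisely the maximal abelian subextension of \(F\vert L\); translating through Galois theory gives \(M/M^\prime=\mathrm{Gal}\left(\mathrm{F}_p^1{L}\vert L\right)\simeq\mathrm{Cl}_p{L}\) by Artin reciprocity for \(L\). Ranging over all \(L\) yields \(\tau(\mathfrak{G})=\tau(K)\).

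For the kernel component \(\varkappa\), I would invoke Artin's reciprocity law in its functorial form, which I expect to be the crux of the argument. The point is that the reciprocity isomorphisms \(\mathrm{Cl}_p{K}\xrightarrow{\sim}\mathfrak{G}/\mathfrak{G}^\prime\) and \(\mathrm{Cl}_p{L}\xrightarrow{\sim}M/M^\prime\) intertwine the arithmetic class extension homomorphism with the group-theoretic transfer; that is, the square
\[
\begin{array}{ccc}
\mathrm{Cl}_p{K} & \stackrel{j_{L\vert K}}{\longrightarrow} & \mathrm{Cl}_p{L}\\
\wr\downarrow & & \wr\downarrow\\
\mathfrak{G}/\mathfrak{G}^\prime & \stackrel{T_{\mathfrak{G},M}}{\longrightarrow} & M/M^\prime
\end{array}
\]
commutes. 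This is exactly the translation of ideal extension into the Verlagerung that underlies Artin's approach in \cite{Ar2} and Furtw\"angler's proof \cite{Fw} of the Principal Ideal Theorem, worked out in detail by Hasse \cite{Ha2}. Granting commutativity, the isomorphism \(\mathrm{Cl}_p{K}\simeq\mathfrak{G}/\mathfrak{G}^\prime\) carries \(\ker j_{L\vert K}\) onto \(\ker T_{\mathfrak{G},M}\), so that \(\varkappa(K)=\varkappa(\mathfrak{G})\) entrywise. Combining the two components gives \(\mathrm{AP}(\mathfrak{G})=\mathrm{AP}(K)\).

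The main obstacle is the commutativity of this square: everything else is a bookkeeping exercise in Galois theory together with the maximality characterising Hilbert \(p\)-class fields, whereas the compatibility of the reciprocity map with extension of ideal classes on the one side and the transfer homomorphism on the other is the genuinely deep input. I would expect to cite it from \cite{Ar2} and \cite{Ha2} rather than reprove it here, noting only that the agreement of the generator rank \(d_1\left(\mathfrak{G}\right)=r_p{K}\) recorded in \eqref{eqn:GeneratorRank} is a consistency check already guaranteed by the abelianization isomorphism \(\mathfrak{G}/\mathfrak{G}^\prime\simeq\mathrm{Cl}_p{K}\).
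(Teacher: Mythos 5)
Your proposal is correct and follows essentially the same route as the paper: the paper's proof is simply a citation of Hasse's detailed argument (\cite[\S\ 27]{Ha2}), whose skeleton --- the Galois correspondence between intermediate fields \(K<L<\mathrm{F}_p^1{K}\) and subgroups \(\mathfrak{G}^\prime<M<\mathfrak{G}\), the inclusion \(\mathrm{F}_p^1{L}\subseteq\mathrm{F}_p^2{K}\) giving \(M/M^\prime\simeq\mathrm{Cl}_p{L}\), and the compatibility of the reciprocity isomorphisms with \(j_{L\vert K}\) and the Verlagerung --- is exactly what you reconstruct. Deferring the commutativity of the reciprocity--transfer square to \cite{Ar2} and \cite{Ha2} is precisely what the paper itself does, so your treatment is consistent with (and somewhat more explicit than) the published proof.
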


\begin{proof}
A sketch of the proof is indicated in
\cite{My}
and
\cite[\S\ 2.3, pp. 476--478]{Ma2},
but the precise proof has been given by Hasse in
\cite[\S\ 27, pp. 164--175]{Ha2}.
We have elaborated Hasse's proof on Wikipedia:\\
\verb+https://en.wikipedia.org/wiki/Principalization_(algebra)+
\end{proof}



\subsection{Metabelian \(p\)-groups of maximal class}
\label{ss:MaximalClass}

Generally, let \(G\) be a finite \(p\)-group, for an arbitrary prime number \(p\ge 2\).
Assume that \(G\) is of order \(\lvert G\rvert=p^n\), where \(n\) is the \textit{logarithmic order},
of \textit{nilpotency class} \(c=\mathrm{cl}(G)=m-1\), where \(m\) is the \textit{index of nilpotency},
and of \textit{coclass} \(r=\mathrm{cc}(G)=n-c=n-m+1\).
Denote the descending \textit{lower central series} of \(G\) by

\begin{equation}
\label{eqn:LowerCentral}
G=\gamma_1{G}>\gamma_2{G}>\gamma_3{G}>\ldots>\gamma_{m-1}{G}>\gamma_m{G}=1,
\end{equation}

\noindent
where \(\gamma_j{G}=\lbrack\gamma_{j-1}{G},G\rbrack\), for \(j\ge 2\),
and the ascending \textit{upper central series} of \(G\) by

\begin{equation}
\label{eqn:UpperCentral}
1=\zeta_0{G}<\zeta_1{G}<\zeta_2{G}<\ldots<\zeta_{c-1}{G}<\zeta_c{G}=G,
\end{equation}

\noindent
where \(\zeta_j{G}/\zeta_{j-1}{G}=\mathrm{Centre}(G/\zeta_{j-1}{G})\), for \(j\ge 1\).



Now let \(G\) be a metabelian \(p\)-group of coclass \(r=\mathrm{cc}(G)=1\), whence \(n=m\).
Then the abelianization \(G/G^\prime\) is of type \(\lbrack p,p\rbrack\)
and \(G=\langle x,y\rangle\) can be generated by two elements \(x,y\).
Suppose that \(m\ge 3\), i.e., \(G\) is non-abelian.
Denote the \textit{two-step centralizer},
that is the centralizer of the two-step factor group
\(\gamma_2{G}/\gamma_4{G}\)
of the lower central series, by

\begin{equation}
\label{eqn:TwoStepCentralizer}
\begin{aligned}
\chi_2{G}             &:= \left\lbrace g\in G\mid\lbrack g,u\rbrack\in\gamma_4{G} \quad \text{ for all } u\in\gamma_2{G}\right\rbrace, \text{ that is,}\\
\chi_2{G}/\gamma_4{G} &= \mathrm{Centralizer}_{G/\gamma_4{G}}(\gamma_2{G}/\gamma_4{G}).
\end{aligned}
\end{equation}

\noindent
\(\chi_2{G}\) is the biggest subgroup of \(G\) such that
\(\lbrack\chi_2{G},\gamma_2{G}\rbrack\le\gamma_4{G}\).
It is characteristic, contains the commutator subgroup \(\gamma_2{G}\), and
coincides with \(G\) if and only if \(m=3\),
whence we have \(\gamma_2{G}<\chi_2{G}<G\) \(\Longleftrightarrow\) \(m\ge 4\).
Select \textit{normalized generators} of \(G=\langle x,y\rangle\) such that

\begin{equation}
\label{eqn:Generators}
x\in G\setminus\chi_2{G}, \text{ if } m\ge 4, \quad y\in\chi_2{G}\setminus\gamma_2{G}.
\end{equation}

\noindent
Starting with the main commutator \(s_2:=\lbrack y,x\rbrack\in\gamma_2{G}\)
define the \textit{higher iterated commutators} recursively by
\(s_j:=\lbrack s_{j-1},x\rbrack\in\gamma_j{G}\), for \(j\ge 3\).


The general theory of \(p\)-groups of maximal class can be found in Huppert
\cite[\S\ 14, p. 361]{Hp}
and Berkovich
\cite[\S\ 9, p. 114]{Bv}.

In the isomorphism class of a metabelian \(p\)-group \(G\) of maximal class,
and of order \(\lvert G\rvert=p^m\),
there exists a representative  \(G_a^{m}(z,w)\)
whose normalized generators satisfy the following relations
with a fixed system of parameters
\(a=(a(m-k),\ldots,a(m-1))\), \(w\), and \(z\),
according to Blackburn
\cite{Bl}
and Miech
\cite[p. 332]{Mi1}:

\begin{itemize}

\item
parametrized \textit{commutator relations} with parameters \(0\le a(m-\ell)\le p-1\) for \(1\le\ell\le k\),
in particular, with non-vanishing parameter \(a(m-k)>0\),

\begin{equation}
\label{eqn:CommutatorRel}
\lbrack s_j,y\rbrack=
\begin{cases}
\prod_{\ell=1}^{k-j+2} s_{m-\ell}^{a(m-j+2-\ell)}\in\lbrack\gamma_j{G},\chi_2{G}\rbrack=\gamma_{m-k+j-2}{G}, \text{ for } 2 \le j\le k+1,\\
1, \text{ for } j\ge k+2,
\end{cases}
\end{equation}

\item
parametrized \textit{relations for} \(p\)th \textit{powers} of the generators \(x\), \(y\) and of higher commutators \(s_j\),
with parameters \(0\le w,z\le p-1\), and \(v:=a(m-k)\) if \(k=p-2\), but \(v:=0\) if \(k\le p-3\),

\begin{equation}
\label{eqn:PowerRel}
\begin{aligned}
(xy^{j})^p                                              &= s_{m-1}^{w+zj+vj^2}\in\zeta_1{G}, \quad \text{ for } 0\le j\le p-1 \\
y^p\prod_{\ell=2}^p\,s_\ell^{\binom{p}{\ell}}           &= s_{m-1}^z\in\zeta_1{G}, \\
s_{j+1}^p\prod_{\ell=2}^p\,s_{j+\ell}^{\binom{p}{\ell}} &= 1, \quad \text{ for } 1\le j\le m-2,
\end{aligned}
\end{equation}

\item
parametrized \textit{nilpotency relations} with parameter \(m\ge 3\) (the index of nilpotency),

\begin{equation}
\label{eqn:NilpotencyRel}
s_j\ne 1, \text{ for } j\le m-1, \quad \text{ but } s_j=1, \text{ for } j\ge m,
\end{equation}

\noindent
expressing the polycyclic structure
\(\gamma_j{G}=\langle s_j,\gamma_{j+1}{G}\rangle\),
\(\gamma_j{G}/\gamma_{j+1}{G}\simeq C_p\), for \(j\ge 2\) and \(j\le m-1\),
with \textit{cyclic factors} (CF) of the lower central series,
which coincides here with the reverse upper central series, \(\zeta_j{G}=\gamma_{m-j}{G}\), for \(0\le j\le m-1\),

\item
and trivial \textit{metabelian relations} within \(G^\prime\),

\begin{equation}
\label{eqn:MetabelianRel}
\lbrack s_i,s_j\rbrack=1, \text{ for all } i,j\ge 2.
\end{equation}

\end{itemize}


\noindent
In the case of an index of nilpotency \(m\ge 4\), the commutator relation for \(s_2\) in Formula
\eqref{eqn:CommutatorRel}
explicitly describes the properties of the two-step centralizer
\(\chi_2{G}=\langle y,\gamma_2{G}\rangle\),
which are implicitly postulated by Formula
\eqref{eqn:TwoStepCentralizer}:

\begin{equation}
\label{eqn:Defect}
\lbrack\gamma_2{G},\chi_2{G}\rbrack=\gamma_{m-k}{G}=\zeta_k{G},
\end{equation}

\noindent
where \(k=k(G)\) denotes the \textit{defect of commutativity} of \(G\),
which displays the following variety of possible values:
\(k=0\) for \(3\le m\le 4\), \(0\le k\le m-4\) for \(m\ge 5\),
and \(0\le k\le\min(m-4,p-2)\) for \(m\ge p+1\),
according to Miech
\cite[p. 331]{Mi1}.


The maximal normal subgroups \(M_i\) of \(G\)
contain the commutator subgroup \(\gamma_2{G}\) of \(G\)
as a normal subgroup of index \(p\) and thus
are of the shape \(M_i=\langle g_i,\gamma_2{G}\rangle\).
We define a standard ordering by
\(g_1=y\) and \(g_i=xy^{i-2}\) for \(2\le i\le p+1\).
In particular,
\(M_1=\langle y,\gamma_2{G}\rangle=\chi_2{G}\)
is the two-step centralizer. In summary:

\begin{equation}
\label{eqn:MaximalSubgroups}
M_1=\langle y,\gamma_2{G}\rangle=\chi_2{G}, \quad \text{ and } \quad M_i=\langle xy^{i-2},\gamma_2{G}\rangle \quad \text{ for } 2\le i\le p+1.
\end{equation}



\subsection{Polarization of the transfer target type}
\label{ss:Polarization}

\noindent
The transfer target type \(\tau(G)\) of a non-abelian \(p\)-group \(G\)
whose second derived quotient \(G/G^{\prime\prime}\) is of maximal class
has a particularly simple form with
a single \textit{polarized} component
and \(p\) fixed components of type \(\lbrack p,p\rbrack\).
The polarization is due to the \textit{two-step centralizer} \(\chi_2{\mathfrak{G}}\)
of the metabelianization \(\mathfrak{G}:=G/G^{\prime\prime}\) of \(G\)
\cite[\S\ 3.1.1, p. 412]{Ma4}.

\begin{definition}
\label{dfn:NearlyHomocyclic}
Let \(p\) be a prime number and \(e\ge p-1\) be an integer.
By the \textit{nearly homocyclic} abelian \(p\)-group of order \(p^e\) we understand the abelian group

\begin{equation}
\label{eqn:NearlyHomocyclic}
A(p,e):=\left(\overbrace{p^{q+1},\ldots,p^{q+1}}^{r \text{ times}},\overbrace{p^q,\ldots,p^q}^{p-1-r \text{ times}}\right),
\end{equation}

\noindent
with \(r\) invariants \(p^{q+1}\) and \(p-1-r\) invariants \(p^q\),
where \(q\ge 1\) and \(0\le r<p-1\) denote the quotient and remainder
of the Euclidean division \(e=q\cdot (p-1)+r\) of \(e\) by \(p-1\).
Additionally, let

\begin{equation}
\label{eqn:Elementary}
A(p,r):=\left(\overbrace{p,\ldots,p}^{r \text{ times}}\right),
\end{equation}

\noindent
be the \textit{elementary} abelian \(p\)-group of rank \(r\) for \(1\le r<p-1\),
and \(A(p,0):=1\) the trivial group.

\end{definition}

The abelian group \(A(p,e)\) is homocyclic if and only if
\(e\) is a multiple of \(p-1\) or \(1\le e<p-1\).


\begin{theorem}
\label{thm:Polarization}
Let \(p\) be a prime number and \(G\) be a \(p\)-group
whose second derived quotient \(\mathfrak{G}:=G/G^{\prime\prime}\)
is of coclass \(\mathrm{cc}(\mathfrak{G})=1\), nilpotency class \(c:=\mathrm{cl}(\mathfrak{G})\ge 2\), and defect \(k:=k(\mathfrak{G})\),
but is neither isomorphic to the extra special \(p\)-group \(G_0^3(0,1)\) of order \(p^3\) and exponent \(p^2\)
nor to the Sylow \(p\)-subgroup \(G_0^{p+1}(1,0)\simeq\mathrm{Syl}_p\mathrm{A}_{p^2}\) of the alternating group of degree \(p^2\),
with odd \(p\ge 3\).
Then the transfer target type of \(G\) is given by

\begin{equation}
\label{eqn:Polarization}
\tau(G)=\left(A(p,c-k),\overbrace{\lbrack p,p\rbrack,\ldots,\lbrack p,p\rbrack}^{p \text{ times}}\right).
\end{equation}

\end{theorem}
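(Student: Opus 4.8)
The plan is to reduce the claim to a computation of the abelianizations of the \(p+1\) maximal subgroups of the metabelianization \(\mathfrak{G}=G/G^{\prime\prime}\), and then to carry out that computation inside the Blackburn--Miech presentation. First I would observe that the transfer target type depends only on \(\mathfrak{G}=G/G^{\prime\prime}\). Indeed, every intermediate group \(M\) with \(G^\prime<M<G\) contains \(G^\prime\), whence \(G^{\prime\prime}=\lbrack G^\prime,G^\prime\rbrack\le\lbrack M,M\rbrack=M^\prime\), so the target \(M/M^\prime\) is canonically isomorphic to \(\bar{M}/\bar{M}^\prime\) for the corresponding maximal subgroup \(\bar{M}=M/G^{\prime\prime}\) of \(\mathfrak{G}\). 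Since \(G/G^\prime\simeq\mathfrak{G}/\mathfrak{G}^\prime\) is of type \(\lbrack p,p\rbrack\), both groups have exactly \(p+1\) such intermediate groups and \(\tau(G)=\tau(\mathfrak{G})\). This lets me replace \(G\) by \(\mathfrak{G}\) and work entirely inside a metabelian \(p\)-group of maximal class, for which I may invoke a representative \(G_a^m(z,w)\) with \(m=c+1\), its normalized generators \(x,y\), iterated commutators \(s_j\), and the relations \eqref{eqn:CommutatorRel}--\eqref{eqn:MetabelianRel}, together with the explicit list of maximal subgroups \(M_1=\chi_2\mathfrak{G}=\langle y,\gamma_2\mathfrak{G}\rangle\) and \(M_i=\langle xy^{i-2},\gamma_2\mathfrak{G}\rangle\) for \(2\le i\le p+1\) from \eqref{eqn:MaximalSubgroups}.

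Next I would dispatch the \(p\) fixed components. For \(2\le i\le p+1\) the generator \(xy^{i-2}\) lies outside the two-step centralizer, so its commutators with the \(s_j\) are dominated by the \(x\)-part: from \(\lbrack x,s_j\rbrack=s_{j+1}^{-1}\) and the fact that the \(y\)-contribution \(\lbrack y,s_j\rbrack\in\gamma_{m-k+j-2}\mathfrak{G}\) lies deeper in the lower central series (here \(m-k\ge 4\) because \(k\le m-4\)), an induction on \(j\) gives \(s_3,\ldots,s_{m-1}\in M_i^\prime\), whence \(M_i^\prime=\gamma_3\mathfrak{G}\). A direct order count then yields \(\lvert M_i/M_i^\prime\rvert=p^{(m-1)-(m-3)}=p^2\), and the power relations \eqref{eqn:PowerRel} show that the images of \(xy^{i-2}\) and of \(s_2\) both have order \(p\), because \((xy^{i-2})^p\in\gamma_{m-1}\mathfrak{G}\le\gamma_3\mathfrak{G}\) and \(s_2^p\in\gamma_3\mathfrak{G}\). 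Hence \(M_i/M_i^\prime\simeq\lbrack p,p\rbrack\). This argument needs \(m\ge 4\); the low-class case \(c=2\) (\(m=3\)) I would treat directly, and it is precisely there that the excluded extra-special group of exponent \(p^2\) occurs.

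Finally, and this is where I expect the real work, I would compute the single polarized component \(M_1/M_1^\prime\). The commutator relations \eqref{eqn:CommutatorRel} together with \eqref{eqn:Defect} give \(M_1^\prime=\lbrack y,\gamma_2\mathfrak{G}\rbrack\le\gamma_{m-k}\mathfrak{G}\), and in fact equality holds because the leading term of \(\lbrack s_2,y\rbrack\) is \(s_{m-k}^{a(m-k)}\) with non-vanishing parameter \(a(m-k)\); an induction then places each \(s_{m-k},\ldots,s_{m-1}\) into \(M_1^\prime\). This yields \(\lvert M_1/M_1^\prime\rvert=p^{(m-1)-k}=p^{c-k}\) at once. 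The harder task is to pin down the abelian type as the nearly homocyclic group \(A(p,c-k)\) of Definition \ref{dfn:NearlyHomocyclic}: here the power relations \(s_{j+1}^p\prod_{\ell=2}^p s_{j+\ell}^{\binom{p}{\ell}}=1\) are decisive, since for odd \(p\) only the binomial coefficient \(\binom{p}{p}=1\) is a unit modulo \(p\), so raising to the \(p\)-th power shifts the index in the lower central series down by \(p-1\). Bookkeeping this uniform shift against the truncation imposed by the nilpotency relations \eqref{eqn:NilpotencyRel} forces exactly \(r\) invariants \(p^{q+1}\) and \(p-1-r\) invariants \(p^q\), where \(c-k=q(p-1)+r\), that is \(M_1/M_1^\prime\simeq A(p,c-k)\).

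The main obstacle is controlling the carries produced by these \(p\)-th power relations in the generic case, and simultaneously isolating the two borderline configurations in which the analysis degenerates. For the extra-special group \(G_0^3(0,1)\) one has \(m=3\), so \(\chi_2\mathfrak{G}=\mathfrak{G}\) and the would-be polarized subgroup is cyclic \(C_{p^2}\) rather than \(A(p,2)=\lbrack p,p\rbrack\); for \(G_0^{p+1}(1,0)\simeq\mathrm{Syl}_p\mathrm{A}_{p^2}\) the subgroup \(\chi_2\mathfrak{G}\) collapses to an elementary abelian group of rank \(p\) instead of the rank-\((p-1)\) group \(A(p,p)\). Verifying that these are the only two exceptions, and that the shift-by-\((p-1)\) mechanism otherwise produces precisely the nearly homocyclic invariants, is the crux of the proof.
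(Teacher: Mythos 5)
Your proposal is correct in substance and, at its computational core, follows the same route as the paper: reduction to the metabelianization \(\mathfrak{G}\), then explicit determination of the \(M_i/M_i^\prime\) inside the Blackburn--Miech presentation, where the third power relation in \eqref{eqn:PowerRel} acts as a shift by \(p-1\) (because \(\binom{p}{\ell}\equiv 0\pmod p\) for \(2\le\ell\le p-1\)) and, played against the nilpotency truncation, produces the nearly homocyclic invariants, with the two excluded groups emerging exactly as the degenerate cases. Two of your steps are genuinely different and more self-contained than the paper's. First, you prove \(\tau(G)=\tau(\mathfrak{G})\) directly from \(G^{\prime\prime}\le M^\prime\) for every \(G^\prime<M<G\), so that \(M/M^\prime\simeq\bar{M}/\bar{M}^\prime\) under the lattice bijection \(M\mapsto M/G^{\prime\prime}\); the paper instead invokes Theorem \ref{thm:RstrAPofCompleteCover} (the Main Theorem of \cite{Ma11}), which gives the stronger equality \(\mathrm{AP}(G)=\mathrm{AP}(\mathfrak{G})\) including transfer kernels. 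Your argument buys elementarity and suffices here, since only targets occur in \eqref{eqn:Polarization}; the paper's citation buys uniformity, as the same theorem is reused for kernels in \S\ \ref{s:Proofs}. Second, you derive the subgroup formulas \(M_i^\prime=\gamma_3{\mathfrak{G}}\) for \(2\le i\le p+1\) and \(M_1^\prime=\gamma_{m-k}{\mathfrak{G}}\) from the commutator relations, whereas the paper cites them from \cite{Ma1} as Formula \eqref{eqn:DerivedSubgroups}. Beyond that, what you call bookkeeping is precisely what the paper carries out as a finite nested double induction on the quotient and remainder of \(c-k\) under division by \(p-1\), terminating at the second equation of \eqref{eqn:PowerRel}, where the configuration \(m=p+1\), \(z=1\) isolates \(\mathrm{Syl}_p\mathrm{A}_{p^2}\). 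One detail needs correction: for \(G_0^3(0,1)\) the failure is not in the polarized component --- with normalized generators \(M_1=\langle y,\gamma_2{\mathfrak{G}}\rangle\) still has type \(\lbrack p,p\rbrack\) --- but in the \(p\) fixed components, which become cyclic of order \(p^2\) because \((xy^{i-2})^p=s_2\ne 1\) while \(\gamma_3{\mathfrak{G}}=1\); your earlier placement of this exception inside the \(m=3\) case of the fixed-component computation is the right one, and your final paragraph should be adjusted to match it.
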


\begin{proof}
According to Theorem
\ref{thm:RstrAPofCompleteCover},
\(G\) and \(\mathfrak{G}\) share a common restricted Artin pattern,
\(\mathrm{AP}(G)=\mathrm{AP}(\mathfrak{G})\).
In particular they share the same transfer target type
\(\tau(G)=\tau(\mathfrak{G})\).
Thus we have reduced the claim to the metabelian \(p\)-group \(\mathfrak{G}\) of maximal class,
as discussed in \S\
\ref{ss:MaximalClass}.

We use the standard ordering of the maximal subgroups \(M_i\unlhd\mathfrak{G}\) in Formula
\eqref{eqn:MaximalSubgroups}
and recall from
\cite[Cor. 3.1, p. 476]{Ma1}
that the commutator subgroups \(M_i^\prime\) are given by

\begin{equation}
\label{eqn:DerivedSubgroups}
M_1^\prime=\gamma_{m-k}\mathfrak{G}, \quad \text{ and }\quad M_i^\prime=\gamma_3{\mathfrak{G}} \text{ for } 2\le i\le p+1.
\end{equation}

\noindent
We start by showing that \(M_i/M_i^\prime\simeq\lbrack p,p\rbrack\) for \(2\le i\le p+1\).
To this end, we observe that
\(M_i=\langle xy^{i-2},\gamma_2{\mathfrak{G}}\rangle\) and \(\gamma_2{\mathfrak{G}}=\langle s_2,\gamma_3{\mathfrak{G}}\rangle\),
and thus \(M_i/M_i^\prime=\langle xy^{i-2},s_2,\gamma_3{\mathfrak{G}}\rangle/\gamma_3{\mathfrak{G}}\)
where \(x,y\) are the normalized generators in Formula
\eqref{eqn:Generators}
and \(s_2=\lbrack y,x\rbrack\) is the main commutator of \(\mathfrak{G}\).
We have to determine the order of the elements \(xy^{i-2}\), with \(2\le i\le p+1\), and \(s_2\) modulo \(\gamma_3{\mathfrak{G}}\).
The third equation in Formula
\eqref{eqn:PowerRel}
yields \(s_{2}^p\cdot\prod_{\ell=2}^p\,s_{1+\ell}^{\binom{p}{\ell}}=1\) for \(j=1\),
and thus \(s_{2}^p\in\gamma_3{\mathfrak{G}}\), i.e., \(\mathrm{ord}(s_{2})=p\).
The first equation in the same formula gives
\((xy^{j})^p=s_{m-1}^{w+zj+vj^2}\), for \(0\le j\le p-1\),
which is an element of \(\gamma_3{\mathfrak{G}}\) when \(m-1\ge 3\), that is \(m\ge 4\).
For \(m=3\), however, we certainly have \(k=0\) and thus \(v=0\),
but we must distinguish between the two extra special groups
\(G_0^3(0,0)\), where \(z=w=0\) and \((xy^{j})^p=1\),
and \(G_0^3(0,1)\), where \(z=0\), \(w=1\) and \((xy^{j})^p=s_2\).
In summary, we have \(\mathrm{ord}(xy^{i-2})=p^2\) and \(M_i/M_i^\prime\simeq\lbrack p^2\rbrack\) for \(G_0^3(0,1)\),
but \(\mathrm{ord}(xy^{i-2})=p\) otherwise,
which proves our claim.

We continue by showing that \(M_1/M_1^\prime\simeq A(p,c-k)\), where \(c=m-1\).
In this case, we have
\(M_1=\langle y,\gamma_2{\mathfrak{G}}\rangle\), \(\gamma_2{\mathfrak{G}}=\langle s_2,\ldots,s_{m-k-1},\gamma_{m-k}{\mathfrak{G}}\rangle\),
and thus \(M_1/M_1^\prime=\langle y,s_2,\ldots,s_{m-k-1},\gamma_{m-k}{\mathfrak{G}}\rangle/\gamma_{m-k}{\mathfrak{G}}\).
We put \(N:=m-k\) and use the third equation in Formula
\eqref{eqn:PowerRel},
that is, 
\[s_{j+1}^p\cdot s_{j+2}^{\binom{p}{2}}\cdots s_{j+p-1}^{\binom{p}{p-1}}\cdot s_{j+p}=1, \quad \text{ for } 1\le j\le m-2.\]
These power relations enable us
to compute the order of the elements \(y\) and \(s_j\), with \(2\le j\le N-1\), modulo \(\gamma_{N}{\mathfrak{G}}\)
by a finite nested double induction with respect to the quotient \(q\ge 0\) and the remainder \(0\le r<p-1\)
of the Euclidean division \(N-1=q\cdot (p-1)+r\).
Thereby we always have to observe that \(s_j\in\gamma_{N}{\mathfrak{G}}\), i.e. \(\mathrm{ord}(s_j)=1\), for \(j\ge N\),
and that the binomial coefficients \(\binom{p}{j}\) are multiples of \(p\) for \(1\le j\le p-1\).

The first inner induction on the remainders successively yields\\
\(s_{N-1}^p=1\), \(s_{N-2}^ps_{N-1}^{\binom{p}{2}}=1\), and so on until \(s_{N-p+1}^ps_{N-p+2}^{\binom{p}{2}}\cdots s_{N-1}^{\binom{p}{p-1}}=1\),\\
that is, we can determine the orders \(\mathrm{ord}(s_j)=p\), for \(N-(p-1)\le j\le N-1\), recursively.

The second inner induction on the remainders successively yields\\
\(s_{N-p}^ps_{N-1}=1\), \(s_{N-p-1}^ps_{N-p}^{\binom{p}{2}}s_{N-2}=1\), and so on until \(s_{N-2p+2}^ps_{N-2p+3}^{\binom{p}{2}}\cdots s_{N-p}^{\binom{p}{p-1}}s_{N-p+1}=1\),\\
that is, we can determine the orders \(\mathrm{ord}(s_j)=p^2\), for \(N-2(p-1)\le j\le N-(p-1)-1\), recursively,
but also expressions for \(s_j\) with \(N-(p-1)\le j\le N-1\) in terms of \(s_\ell\) with \(\ell<j\).

In the same manner, we continue to obtain the orders\\
\(\mathrm{ord}(s_j)=p^3\), for \(N-3(p-1)\le j\le N-2(p-1)-1\),
and so on until

\begin{equation}
\label{eqn:LastButOneRow}
\mathrm{ord}(s_j)=p^q, \quad \text{ for } N-q(p-1)\le j\le N-(q-1)(p-1)-1,
\end{equation}

\noindent
by outer induction on the quotients.

The last inner induction on the remainders will in general be shorter than the previous inner inductions and yields

\begin{equation}
\label{eqn:LastRow}
\mathrm{ord}(s_j)=p^{q+1}, \quad \text{ for } N-q(p-1)-r\le j\le N-q(p-1)-1,
\end{equation}

\noindent
but the recursive process terminates when we reach \(N-q(p-1)-(r-1)=2\), that is,\\
\(s_{2}^p\cdot s_{3}^{\binom{p}{2}}\cdots s_{p}^{\binom{p}{p-1}}\cdot s_{p+1}=1\),\\
which shows that the higher commutators \(s_{p+1},\ldots,s_{N-1}\) can be expressed in terms of \(s_2,\ldots,s_p\).

Finally we employ the second equation in Formula
\eqref{eqn:PowerRel},
that is, 
\[y^p\cdot s_2^{\binom{p}{2}}\cdots s_{p-1}^{\binom{p}{p-1}}\cdot s_p=s_{m-1}^z,\]
which formally corresponds to \(N-q(p-1)-r=1\) when we put \(s_1:=y\),
but now the right side is \(s_{m-1}^z\) instead of \(1\).
Here we must be careful and distinguish the exceptional case of
the Sylow \(p\)-subgroup \(G_0^{p+1}(1,0)\simeq\mathrm{Syl}_p\mathrm{A}_{p^2}\) of the alternating group of degree \(p^2\),
if \(p\ge 3\) is an odd prime.
In this special case, we have \(m=p+1\), \(z=1\), and thus \(s_{m-1}^z=s_{p}\),
which has the unique effect that \(s_p\) cancels and cannot be expressed in terms of \(y,s_2,\ldots,s_{p-1}\).
All elements \(y,s_2,\ldots,s_{p-1},s_p\) are of order \(p\) and generate an elementary abelian \(p\)-group
of rank \(p\), bigger than the rank \(p-1\) of all nearly homocyclic abelian \(p\)-groups.

In all other cases, the higher commutators \(s_p,s_{p+1},\ldots,s_{N-1}\) can be expressed in terms of \(y,s_2,\ldots,s_{p-1}\),
and the Formulas
\eqref{eqn:LastButOneRow}
and
\eqref{eqn:LastRow}
show that \(M_1/M_1^\prime\) is isomorphic to the nearly homocyclic abelian \(p\)-group \(A(p,c-k)\)
of logarithmic order \(N-1=m-k-1=c-k\).
\end{proof}

Note that Theorem
\ref{thm:Polarization}
is more general than Blackburn's Theorem
\cite[Thm. 3.4, p. 68]{Bl},
since we have used Theorem
\ref{thm:RstrAPofCompleteCover}
and therefore the \(p\)-group \(G\) need not be of maximal class.



\section{Monotony of Artin patterns on descendant trees}
\label{s:MonotonyDescendantTrees}

\begin{definition}
\label{dfn:DescTree}
Let \(p\) be a prime 
and \(G\), \(H\) and \(R\) be finite \(p\)-groups.

\begin{enumerate}

\item
The \textit{lower central series} (LCS) of \(G\) is defined recursively by

\begin{equation}
\label{eqn:LowerCentralSeries}
\gamma_1{G}:=G, \quad \text{ and } \quad \gamma_n{G}:=\left\lbrack\gamma_{n-1}{G},G\right\rbrack \text{ for } n\ge 2.
\end{equation}

\item
We call \(G\) an \textit{immediate descendant} (or \textit{child}) of \(H\),
and \(H\) \textit{the parent} of \(G\),
if \(H\simeq G/\gamma_c{G}\) is isomorphic to the image of
the natural projection \(\pi:\,G\to G/\gamma_c{G}\) of \(G\) onto the quotient by
the last non-trivial term \(\gamma_c{G}>1\) of the LCS of \(G\),
where \(c:=\mathrm{cl}(G)\) denotes the \textit{nilpotency class} of \(G\).
In this case, we consider the projection \(\pi\)
as a directed edge \(G\to H\) from \(G\) to \(H\simeq\pi{G}\),
and we speak about the \textit{parent operator} \(\pi\):

\begin{equation}
\label{eqn:ChildParent}
\pi:\,G\to \pi{G}=G/\gamma_c{G}\simeq H \quad \text{ with } \quad c=\mathrm{cl}(G).
\end{equation}

\item
We call \(G\) a \textit{descendant} of \(H\),
and \(H\) an \textit{ancestor} of \(G\),
if there exists a finite \textit{path} of directed edges

\begin{equation}
\label{eqn:DescendantAncestor}
\left(Q_j\to Q_{j+1}\right)_{0\le j<\ell} \quad \text{ such that } \quad G=Q_0 \text{ and } H=Q_\ell,
\end{equation}

\noindent
where \(\ell\ge 0\) denotes the \textit{path length}.

\item
The \textit{descendant tree} of \(R\), denoted by \(\mathcal{T}(R)\),
is the rooted directed tree with root \(R\)
having the isomorphism classes of all descendants of \(R\) as its \textit{vertices}
and all (child, parent)-pairs \((G,H)\) among the descendants \(G,H\) of \(R\)
as its \textit{directed edges} \(G\to\pi{G}\simeq H\).
By means of formal iterations \(\pi^j\) of the parent operator \(\pi\),
each vertex of the descendant tree \(\mathcal{T}(R)\) can be connected with the root \(R\)
by a finite path of edges:

\begin{equation}
\label{eqn:IteratedParentPath}
\mathcal{T}(R)=\left\lbrace G\mid G=\pi^0{G}\to\pi^1{G}\to\pi^2{G}\to\ldots\to\pi^\ell{G}=R \quad \text{ for some } \quad \ell\ge 0\right\rbrace.
\end{equation}

\end{enumerate}

\end{definition}


\noindent
The restricted Artin pattern enjoys the following monotony property on a descendant tree.

\begin{theorem}
\label{thm:Monotony}
Let \(\mathcal{T}(R)\) be the descendant tree with root \(R>1\), a finite non-trivial \(p\)-group,
and let \(G\to\pi{G}\) be a directed edge of the tree.
Then the restricted Artin pattern \(\mathrm{AP}=(\tau,\varkappa)\)
satisfies the following monotonicity relations

\begin{equation}
\label{eqn:Monotony}
\begin{aligned}
     \tau(G) &\ge \tau(\pi{G}), \\
\varkappa(G) &\le \varkappa(\pi{G}),
\end{aligned}
\end{equation}

\noindent
that is, the TTT \(\tau\) is an isotonic mapping
and the TKT \(\varkappa\) is an antitonic mapping
with respect to the partial order \(G>\pi{G}\)
induced by the directed edges \(G\to\pi{G}\).

\end{theorem}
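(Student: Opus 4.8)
The plan is to exploit that the parent \(\pi{G}=G/\gamma_c{G}\), with \(c=\mathrm{cl}(G)\), arises from \(G\) by factoring out the last non-trivial term \(N:=\gamma_c{G}\) of the lower central series. Since \(G\to\pi{G}\) is an edge of \(\mathcal{T}(R)\), the vertex \(\pi{G}\) is a non-trivial descendant of \(R\), so \(\pi{G}>1\) forces \(c\ge 2\) and hence \(N=\gamma_c{G}\le\gamma_2{G}=G^\prime\); the subgroup \(N\) is moreover central in \(G\), as \(\lbrack N,G\rbrack=\gamma_{c+1}{G}=1\). Denoting the projection \(\pi:\,G\to\bar{G}:=G/N\) by a bar, the inclusion \(N\le G^\prime\) ensures that \(\pi\) induces an isomorphism \(G/G^\prime\simeq\bar{G}/\bar{G}^\prime\) and a bijection between the intermediate groups \(G^\prime<M<G\) and their images \(\bar{G}^\prime<\bar{M}=M/N<\bar{G}\). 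Both Artin patterns are therefore indexed by the same set, and it suffices to compare them componentwise.

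For the isotony of the TTT, I would fix a maximal subgroup \(M\) and note that \(N\le G^\prime\le M\) gives \(\bar{M}^\prime=M^\prime N/N\), whence the \(M\)-component of \(\tau(\pi{G})\) is
\begin{equation*}
\bar{M}/\bar{M}^\prime=(M/N)/(M^\prime N/N)\simeq M/(M^\prime N),
\end{equation*}
an epimorphic image of the \(M\)-component \(M/M^\prime\) of \(\tau(G)\), since \(M^\prime\le M^\prime N\). Being a quotient, its abelian type invariants are dominated by those of \(M/M^\prime\), which is exactly the relation \(\tau(G)\ge\tau(\pi{G})\) in the partial order of \cite{Ma11}.

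For the antitony of the TKT, the crucial step is the compatibility of the Artin transfer with the central projection. Writing \(q:\,M/M^\prime\to\bar{M}/\bar{M}^\prime\) for the natural map, I would establish the commuting square
\begin{equation*}
q\circ T_{G,M}=T_{\bar{G},\bar{M}}\circ\pi.
\end{equation*}
This follows from the Verlagerung definition: because \(N\le M\), a transversal \(r_1,\ldots,r_n\) of \(M\) in \(G\) projects to a transversal \(\bar{r}_1,\ldots,\bar{r}_n\) of \(\bar{M}\) in \(\bar{G}\), and the monomials \(t_i\in M\) attached to \(g\) project to the monomials \(t_iN\in\bar{M}\) attached to \(\bar{g}\), so the transfer value \(\prod_i t_iM^\prime\) is carried by \(q\) onto \(\prod_i t_iN\,\bar{M}^\prime=T_{\bar{G},\bar{M}}(\bar{g})\). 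From the square one reads off \(\pi(\ker T_{G,M})\subseteq\ker T_{\bar{G},\bar{M}}\): if \(T_{G,M}(g)=1\) then applying \(q\) yields \(T_{\bar{G},\bar{M}}(\bar{g})=1\). Transporting both kernels into the common abelianization via the isomorphism \(G/G^\prime\simeq\bar{G}/\bar{G}^\prime\) shows that the transfer kernel of the parent contains that of the child. Since total capitulation, i.e.\ a kernel equal to the whole abelianization, is the top element of the ordering in \cite{Ma11} while proper kernels lie below it, ``the parent has the larger kernel'' is precisely the antitonic relation \(\varkappa(G)\le\varkappa(\pi{G})\).

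The main obstacle is the transfer-compatibility square: one has to check carefully that the coset bookkeeping and the monomials entering the Verlagerung genuinely descend along the central projection, and this is exactly the point where the hypothesis \(N=\gamma_c{G}\le G^\prime\le M\) is indispensable. Once this identity is secured both monotonicity statements follow at once, and --- as the argument never invokes coclass \(1\) or maximal class --- the conclusion holds for an arbitrary directed edge of any descendant tree.
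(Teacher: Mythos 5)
Your proof is correct and takes essentially the same route as the paper: both arguments hinge on the observation that the non-trivial root forces \(\ker\pi=\gamma_c{G}\le G^\prime\), then use the induced bijection between intermediate subgroups, exhibit each parent target \(\bar{M}/\bar{M}^\prime\simeq M/(M^\prime N)\) as an epimorphic image of the child target, and push transfer kernels forward along \(\pi\). The only difference is presentational: the paper outsources exactly these facts to citations of \cite{Ma11} (Prop.~5.1, Thm.~5.1, Thm.~5.2), whereas you prove them from scratch --- your commuting square \(q\circ T_{G,M}=T_{\bar{G},\bar{M}}\circ\pi\), obtained by projecting a transversal and the Verlagerung monomials, is precisely the content of the cited \cite[Thm.~5.2]{Ma11}; note also that your ``maximal subgroup'' \(M\) should be an arbitrary intermediate subgroup \(G^\prime<M<G\), but since your argument only uses \(N\le G^\prime\le M\) it covers that case verbatim.
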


\begin{proof}
An abelian \(p\)-group \(A\) has the trivial group \(\pi{A}=A/\gamma_c{A}\simeq 1\) as its parent,
because the last non-trivial lower central \(\gamma_c{A}=\gamma_1{A}=A\) of \(A\) coincides with \(A\).
Since the tree root \(R>1\) is supposed to be non-trivial,
none of its proper descendants \(G>R\) can be abelian. \(G\) must be of class \(c=\mathrm{cl}(G)\ge 2\).
Consequently, if \(\left(G,\pi{G}\right)\) is a (child, parent)-pair of the tree \(\mathcal{T}(R)\),
then the last non-trivial lower central of the child \(G\) is contained in the commutator subgroup of \(G\),
i.e., \(\ker(\pi)=\gamma_c{G}\le\gamma_2{G}=G^\prime\),
as pointed out in
\cite[Thm. 5.3, p. 85]{Ma11}.

The restricted Artin pattern of a finite \(p\)-group \(G\)
is the pair  \(\mathrm{AP}(G)=\left(\tau(G),\varkappa(G)\right)\)
consisting of the transfer target type \(\tau(G)=\left(U/U^\prime\right)_{G^\prime<U<G}\)
and the transfer kernel type \(\varkappa(G)=\left(\ker T_{G,U}\right)_{G^\prime<U<G}\),
where \(T_{G,U}:G\to U/U^\prime\) denotes the Artin transfer
from \(G\) to the abelianization \(U/U^\prime\) of an intermediate group \(G^\prime<U<G\).
For each of these intermediate groups, we have \(\ker(\pi)\le G^\prime<U\),
which admits several statements about the epimorphism  \(\pi:\,G\to\pi{G}\).

According to
\cite[Prop.5.1, p. 82]{Ma11},
the mapping \(\pi\) is a bijection between the following systems of subgroups,
\(\mathcal{U}:=\lbrace U\mid G^\prime<U<G\rbrace\) and
\(\mathcal{V}:=\lbrace V\mid(\pi{G})^\prime=\pi(G^\prime)<V=\pi{U}<\pi{G}\rbrace\),
which is a necessary condition for the uniform comparability of the Artin patterns
\(\mathrm{AP}(G)\) and \(\mathrm{AP}(\pi{G})\)
of (child, parent)-pairs \((G,\pi{G})\) on the tree \(\mathcal{T}(R)\).

The TTT \(\tau\) is an isotonic mapping on the tree \(\mathcal{T}(R)\),
since \(\pi{U}/\pi{U}^\prime\) is an epimorphic image of \(U/U^\prime\),
according to
\cite[Thm. 5.1, p. 78]{Ma11},
and this property was used to define a partial order \(\pi{U}/\pi{U}^\prime\le U/U^\prime\)
on the components of the TTT, in
\cite[Dfn. 5.1, p. 80]{Ma11}.
Combining all components of the TTT, we obtain \(\tau(\pi{G})\le\tau(G)\), by
\cite[Dfn. 5.4, pp. 83--84]{Ma11}.
The (non-strict) inequality has the same direction as \(\pi{G}<G\).

The TKT \(\varkappa\) is an antitonic mapping on the tree \(\mathcal{T}(R)\),
since \(\pi(\ker(T_{G,U}))\le\ker(T_{\pi{G},\pi{U}})\),
according to
\cite[Thm. 5.2, p. 80]{Ma11},
and this property was used to define a partial order \(\ker(T_{G,U})\le\ker(T_{\pi{G},\pi{U}})\)
on the components of the TKT, in
\cite[Dfn. 5.2, p. 82]{Ma11}.
Combining all components of the TKT, we obtain \(\varkappa(G)\le\varkappa(\pi{G})\), by
\cite[Dfn. 5.4, pp. 83--84]{Ma11}.
The (non-strict) inequality has the opposite direction as \(G>\pi{G}\).
\end{proof}

\begin{remark}
\label{rmk:Monotony}
Due to the transitivity of all partial orders involved, Theorem
\ref{thm:Monotony}
remains true, when the (child, parent)-pair \((G,\pi(G))\)
is replaced by any (descendant, ancestor)-pair \((G,H)\).
\end{remark}


For the proof of our principal results,
we require some additional inheritance properties of group theoretic invariants
with respect to the (child, parent)-relation of a descendant tree.
The inheritance is directed from the child to its parent.

\begin{definition}
\label{dfn:SigmaGroup}
A finite or infinite pro-\(p\) group \(G\), with an odd prime \(p\ge 3\),
is called a \(\sigma\)-\textit{group},
if it possesses a \textit{generator inverting} (GI-)automorphism \(\sigma\in\mathrm{Aut}(G)\)
which acts as the inversion mapping on the derived quotient \(G/G^\prime\),

\begin{equation}
\label{eqn:SigmaGroup}
\sigma(g)G^\prime=g^{-1}G^\prime \quad \text{ for all } g\in G.
\end{equation}

\end{definition}


\begin{proposition}
\label{prp:Inheritance}
Let \(\pi:\,G\to\pi{G}\) be an epimorphism of groups
and let \(p\ge 2\) be a prime number.

\begin{enumerate}

\item
Let \(G\) be a pro-\(p\) group, with an odd prime \(p\ge 3\).\\
If \(G\) is a \(\sigma\)-group, then \(\pi{G}\) is also a \(\sigma\)-group.

\item
Let \(G\) be a finite \(p\)-group
and let \(\pi:\,G\to\pi{G}\) be the parent operator \(\pi{G}=G/\gamma_c{G}\) with \(c=\mathrm{cl}(G)\).

\begin{itemize}
\item
The nilpotency class increases exactly by \(1\) from the parent \(\pi{G}\) to the child \(G\).
\item
If \(s\) denotes the step size from parent \(\pi{G}\) to child \(G\),
then the coclass of \(G\) is given by \(\mathrm{cc}(G)=\mathrm{cc}(\pi{G})+(s-1)\),
in particular, \(\mathrm{cc}(G)=\mathrm{cc}(\pi{G})\) for \(s=1\).
\item
The derived length of \(\pi{G}\) is not bigger than the derived length of \(G\).
\end{itemize}

\begin{equation}
\label{eqn:Inheritance}
\mathrm{cl}(G)=\mathrm{cl}(\pi{G})+1, \quad \mathrm{cc}(G)=\mathrm{cc}(\pi{G})+(s-1), \quad  \quad \mathrm{dl}(G)\ge\mathrm{dl}(\pi{G}).
\end{equation}

\item
If \(G\) is a finite metabelian \(p\)-group of maximal class and \(\pi\) is the parent operator,
then the defect of the parent \(\pi{G}\) is not bigger than the defect of the child \(G\),
that is, \(k(\pi{G})\le k(G)\), more precisely

\begin{equation}
\label{eqn:InheritanceDefect}
k(\pi{G})=
\begin{cases}
k(G)   & \text{ if } k(G)=0, \\
k(G)-1 & \text{ if } k(G)\ge 1,
\end{cases}
\end{equation}

\noindent
which can also be expressed in the form \(k(\pi{G})=\max\left(0,k(G)-1\right)\).

\end{enumerate}

\end{proposition}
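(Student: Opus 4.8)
The plan is to prove the three parts in order, each being essentially a direct computation with the defining series. For part (1), I would start from the GI-automorphism $\sigma\in\mathrm{Aut}(G)$ and observe that $\ker(\pi)$ is a characteristic subgroup of $G$: since $\pi$ is the parent operator, $\ker(\pi)=\gamma_c{G}$, which is characteristic, and in the pro-$p$ $\sigma$-group setting any surjection onto a quotient whose kernel is $\sigma$-stable lets $\sigma$ descend. Concretely, I would define $\bar\sigma$ on $\pi{G}$ by $\bar\sigma(\pi(g)):=\pi(\sigma(g))$, check this is well-defined because $\sigma(\ker\pi)=\ker\pi$, and then verify the GI-property on the abelianization: since $\pi$ induces an isomorphism $G/G^\prime\to\pi{G}/(\pi{G})^\prime$ (as $\ker\pi\le G^\prime$, exactly as used in the proof of Theorem 3.2), the inversion action of $\sigma$ on $G/G^\prime$ transports to inversion of $\bar\sigma$ on $\pi{G}/(\pi{G})^\prime$. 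This is routine once the characteristicity of $\ker\pi$ is in hand.

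For part (2), the three formulas follow from the definitions of the invariants together with the child--parent projection. The class statement $\mathrm{cl}(G)=\mathrm{cl}(\pi{G})+1$ is essentially the defining property of the parent operator: $\pi{G}=G/\gamma_c{G}$ with $c=\mathrm{cl}(G)$, and quotienting by the last nontrivial term of the LCS drops the class by exactly one, so $\mathrm{cl}(\pi{G})=c-1$. The coclass formula I would obtain from the definition $\mathrm{cc}=n-\mathrm{cl}$ (the logarithmic order minus the class), writing $\lvert\gamma_c{G}\rvert=p^s$ for the step size $s$: then $\log_p\lvert\pi{G}\rvert=\log_p\lvert G\rvert-s$, while the class drops by $1$, giving $\mathrm{cc}(\pi{G})=\mathrm{cc}(G)-(s-1)$, which is the claimed identity. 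The derived-length inequality $\mathrm{dl}(G)\ge\mathrm{dl}(\pi{G})$ holds because $\pi{G}$ is a homomorphic image of $G$ and derived length cannot increase under surjections.

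The main obstacle is part (3), the precise behaviour of the defect of commutativity $k$ under the parent operator for a metabelian $p$-group of maximal class. Here I would work explicitly with the structural description of \S2.2: the defect is governed by Formula \eqref{eqn:Defect}, namely $\lbrack\gamma_2{G},\chi_2{G}\rbrack=\gamma_{m-k}{G}=\zeta_k{G}$, and by the commutator relations \eqref{eqn:CommutatorRel}. Passing from $G$ to $\pi{G}=G/\gamma_m{G}$ reduces the index of nilpotency from $m$ to $m-1$, and the plan is to track how the nonvanishing parameters $a(m-\ell)$ and the position of the lowest nontrivial commutator $\lbrack s_j,y\rbrack$ shift under this truncation. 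I expect the case split to mirror the statement: when $k(G)=0$ the relation $\lbrack\gamma_2{G},\chi_2{G}\rbrack=\gamma_m{G}$ becomes trivial in the quotient, forcing $k(\pi{G})=0$; when $k(G)\ge 1$ the commutator $\gamma_{m-k}{G}$ survives in $\pi{G}$ but the index of nilpotency has dropped by one, so $\gamma_{(m-1)-k^\prime}(\pi{G})=\gamma_{m-k}(\pi{G})$ forces $k^\prime=k-1$. The delicate point will be confirming that the parameter $a(m-k)>0$ genuinely persists (rather than being annihilated) under truncation, so that the defect of $\pi{G}$ is exactly $k-1$ and not smaller; this is where I would appeal carefully to Miech's relations \cite{Mi1} and the structure in \eqref{eqn:CommutatorRel}. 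The unified formula $k(\pi{G})=\max(0,k(G)-1)$ then follows by combining the two cases.
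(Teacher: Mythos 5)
Your parts (1) and (2) are correct and essentially coincide with the paper's treatment. For (1) the paper merely cites \cite[Thm. 7.2, App., p. 102]{Ma11}, and your direct construction of $\bar\sigma$ on $\pi{G}$ (well defined because $\ker\pi=\gamma_c{G}$ is characteristic, hence $\sigma$-stable, and generator-inverting because $\pi(G^\prime)=(\pi{G})^\prime$) is the standard argument behind that citation. Note that your restriction to a $\sigma$-stable kernel is not merely convenient but necessary: read literally for an arbitrary epimorphism, item (1) would be false, since the group $\langle 5^3,4\rangle$, which \S\ \ref{s:Proofs} observes is not a $\sigma$-group, is an epimorphic image of the free pro-$5$ group of rank $2$, and the latter is a $\sigma$-group (invert the two free generators). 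Part (2) is the same bookkeeping as in the paper, resting on $\gamma_j(\pi{G})=\pi(\gamma_j{G})$, $(\pi{G})^{(j)}=\pi\left(G^{(j)}\right)$, and $\lvert\pi{G}\rvert=p^{n-s}$.

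Part (3) contains a genuine gap, and it is precisely the step the paper goes out of its way to justify. The defect $k(\pi{G})$ is defined through the two-step centralizer $\chi_2(\pi{G})$ of the \emph{quotient}, whereas everything you compute concerns the image $\pi(\chi_2{G})$: in both of your cases you tacitly assume that images of normalized generators of $G$ remain normalized for $\pi{G}$, equivalently that $\pi(\chi_2{G})=\chi_2(\pi{G})$. Without this identification, the relation $\lbrack\gamma_2{\pi{G}},\pi(\chi_2{G})\rbrack=\gamma_{m-k}{\pi{G}}$, obtained by applying $\pi$ to \eqref{eqn:Defect}, says nothing about $k(\pi{G})$, and the parameters of a truncated Miech presentation cannot be read off as the defect of $\pi{G}$. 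The paper's proof is exactly this application of $\pi$ to \eqref{eqn:Defect}, followed by an explicit proof that $\pi\chi_2{G}=\chi_2{\pi{G}}$ via the defining condition \eqref{eqn:TwoStepCentralizer}; your route needs the same lemma. A clean repair: $\pi(\chi_2{G})\subseteq\chi_2(\pi{G})$ is immediate from the definition, $\pi(\chi_2{G})$ is a maximal subgroup of $\pi{G}$, and $\chi_2(\pi{G})<\pi{G}$ whenever $m\ge 5$, so equality follows by maximality; for $m\le 4$ both defects vanish and \eqref{eqn:InheritanceDefect} holds trivially.

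Two smaller points on (3). The parent of a maximal-class group of order $p^m$ is $G/\gamma_{m-1}{G}$, the quotient by the last \emph{non-trivial} term of the lower central series, not $G/\gamma_m{G}=G$; this slip does not propagate, since you correctly let the index of nilpotency drop to $m-1$. Also, the point you flag as delicate --- persistence of $a(m-k)>0$ --- is in fact immediate: the relations of $\pi{G}$ are the images of the relations \eqref{eqn:CommutatorRel}, and truncation kills only the factor $s_{m-1}$, so for $k\ge 2$ the leading parameter survives verbatim, while for $k=1$ the entire right-hand side of $\lbrack s_2,y\rbrack$ dies and $k(\pi{G})=0$. The real delicacy is the $\chi_2$-compatibility above, which your proposal omits.
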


\begin{proof}
Let \(d:=\mathrm{dl}(G)\) be the derived length of \(G\) and
\[G=G^{(0)}>G^{(1)}>\ldots >G^{(d-1)}>G^{(d)}=1\]
be the derived series of \(G\), defined recursively by
\(G^{(0)}:=G\) and \(G^{(j)}:=\lbrack G^{(j-1)},G^{(j-1)}\rbrack\) for \(j\ge 1\).
For any homomorphism \(\pi:\,G\to H\) of groups, the proof of
\cite[Cor. 7.2, Appendix, p. 100]{Ma11}
has shown that
\((\pi{G})^{(j)}=\pi\left(G^{(j)}\right)\) for \(j\ge 0\),
and similarly for the lower central series
\(\gamma_j(\pi{G})=\pi\left(\gamma_j{G}\right)\) for \(j\ge 1\).

\begin{enumerate}

\item
The inheritance of a GI-automorphism has been proved in
\cite[Thm. 7.2, App., p. 102]{Ma11}.

\item
The lower central series of \(\pi{G}\) is given by\\
\(\pi{G}>\gamma_2\pi{G}=\pi\gamma_2{G}=\gamma_2{G}/\gamma_c{G}>\ldots>\gamma_{c-1}\pi{G}=\pi\gamma_{c-1}{G}=\gamma_{c-1}{G}/\gamma_c{G}>\gamma_{c}\pi{G}\simeq 1\),\\
whence \(\mathrm{cl}(\pi{G})=c-1\).

Let \(n\) be the logarithmic order of \(G\), that is \(\lvert G\rvert=p^n\).
If \(\lvert\gamma_c{G}\rvert=p^s\),
then we have \(\lvert \pi{G}\rvert=(G:\gamma_c{G})=p^{n-s}\) and \(\pi{G}\) is of logarithmic order \(n-s\).
Consequently, the coclass of \(\pi{G}\) is given by
\(\mathrm{cc}(\pi{G})=(n-s)-(c-1)=(n-c)-(s-1)=\mathrm{cc}(G)-(s-1)\).

The image of the last non-trivial term \(G^{(d-1)}\) of the derived series of \(G\) under \(\pi\)
is given by \(\pi{G}^{(d-1)}=G^{(d-1)}\ker{\pi}/\gamma_c{G}\), where \(\ker{\pi}=\gamma_c{G}\).
If we denote by \(d^\prime:=\mathrm{dl}(\pi{G})\) the derived length of \(\pi{G}\),
then the derived series of \(\pi{G}\) is given by
\[\pi{G}=\pi{G}^{(0)}>\pi{G}^{(1)}>\ldots >\pi{G}^{(d^\prime-1)}>\pi{G}^{(d^\prime)}=1,\]
where \(d^\prime<d\) if \(G^{(d-1)}\le\gamma_c{G}\), and \(d^\prime=d\) otherwise.

\item
If the order of \(G\) is given by \(\lvert G\rvert=p^m\),
then the two-step centralizer of \(G\) satisfies the condition
\(\lbrack\chi_2{G},\gamma_2{G}\rbrack=\gamma_{m-k}{G}\),
where \(k:=k(G)\le m-4\) denotes the defect of \(G\).
Application of the epimorphism \(\pi\) yields
\[\lbrack\chi_2\pi{G},\gamma_2\pi{G}\rbrack=\lbrack\pi\chi_2{G},\pi\gamma_2{G}\rbrack=\pi\lbrack\chi_2{G},\gamma_2{G}\rbrack=\pi\gamma_{m-k}{G}=\gamma_{m-k}\pi{G},\]
where
\[
\gamma_{m-k}\pi{G}=
\begin{cases}
1                          & \text{ if } k=0, \\
\gamma_{(m-1)-(k-1)}\pi{G} & \text{ if } k\ge 1.
\end{cases}
\]
This proves the claim, since the step size for coclass \(1\) is \(s=1\) and thus \(\lvert\pi{G}\rvert=p^{m-1}\).\\
Here, we have used that \(\pi\chi_2{G}=\chi_2\pi{G}\), since applying the surjection \(\pi\) to the equation
\[\chi_2{G}=\left\lbrace g\in G\mid\lbrack g,u\rbrack\in\gamma_4(G) \quad \text{ for all } u\in\gamma_2(G)\right\rbrace\]
and observing that \(\pi\gamma_2{G}=\gamma_2\pi{G}\) and \(\pi\gamma_4{G}=\gamma_4\pi{G}\) yields
\[\pi\chi_2{G}=\left\lbrace \pi{g}\in\pi{G}\mid\lbrack\pi{g},\pi{u}\rbrack\in\gamma_4\pi{G} \quad \text{ for all } \pi{u}\in\gamma_2\pi{G}\right\rbrace.\qedhere\]
\end{enumerate}
\end{proof}



\section{Proof of the main results}
\label{s:Proofs}

We are now in the position to prove Theorem
\ref{thm:MainTheorem}
and Corollary
\ref{cor:MainTheorem}
simultaneously.
The dominant part of the proof
is valid for \textit{any} number field \(K\)
and admits \textit{two possibilities}
\(\ell_5{K}\in\lbrace 2,3\rbrace\)
for the length of its \(5\)-class tower.
The final specialization to a real quadratic field
\(K=\mathbb{Q}(\sqrt{d})\)
with discriminant \(d>0\) will enforce the fixed length
\(\ell_5{K}=3\).

\begin{proof}
Let \(K\) be an \textit{arbitrary} algebraic number field
with \(5\)-class group
\(\mathrm{Cl}_5{K}\simeq\lbrack 5,5\rbrack\)
and \(5\)-capitulation type
\(\varkappa(K)=\left(\ker{j_{L_i\vert K}}\right)_{1\le i\le 6}\sim (1,0^5)\)
in its six unramified cyclic quintic extensions \(L_1,\ldots,L_6\),
and suppose that the \(5\)-class groups of the latter are given by
\(\tau(K)=\left(\mathrm{Cl}_5{L_i}\right)_{1\le i\le 6}\sim\left(\lbrack 25,5,5,5\rbrack,\lbrack 5,5\rbrack^5\right)\).

Then the restricted Artin pattern of \(K\) is
\(\mathrm{AP}(K)=\left(\tau(K),\varkappa(K)\right)\)
and, by Theorem
\ref{thm:ArtinPattern},
Formula
\eqref{eqn:ArtinPattern},
which constitutes the translation from number theory to group theory,
\(\mathrm{AP}(K)\) coincides with the restricted Artin pattern
\(\mathrm{AP}(\mathfrak{G})=\left(\tau(\mathfrak{G}),\varkappa(\mathfrak{G})\right)\)
of the second \(5\)-class group \(\mathfrak{G}=\mathrm{G}_5^2{K}\),
where
\(\tau(\mathfrak{G})=\left(M_i/M_i^\prime\right)_{1\le i\le 6}\)
and
\(\varkappa(\mathfrak{G})=\left(\ker{T_{\mathfrak{G},M_i}}\right)_{1\le i\le 6}\)
in terms of the six maximal subgroups \(M_1,\ldots,M_6\) of \(\mathfrak{G}\)
and the Artin transfers \(T_{\mathfrak{G},M_i}\).

According to Formula
\eqref{eqn:GeneratorRank},
\(\mathfrak{G}\) possesses the abelianization
\(\mathfrak{G}/\mathfrak{G}^\prime\simeq\mathrm{Cl}_5{K}\simeq\lbrack 5,5\rbrack\).
Hence, the metabelian \(5\)-group \(\mathfrak{G}\) must be a descendant of the abelian \(5\)-group
\(\langle 5^2,2\rangle\simeq\lbrack 5,5\rbrack\),
and for identifying \(\mathfrak{G}\) by the strategy of \textit{pattern recognition via Artin transfers}
we therefore have to start the \(p\)-group generation algorithm
\cite{HEO}
at the root
\(R=\langle 5^2,2\rangle\)
and to construct the descendant tree \(\mathcal{T}(R)\),
always looking for the \textit{assigned search pattern} \(\mathrm{AP}(\mathfrak{G})\).

The recursive steps of this algorithm by Newman
\cite{Nm1}
and O'Brien
\cite{Ob}
may be monitored
on the metabelian skeleton of the coclass-\(1\) subtree \(\mathcal{T}^1(R)\) in
\cite[Fig. 3.3, p. 425]{Ma4},
on the interface between coclass \(1\) and \(2\) in
\cite[Fig. 3.8, p. 448]{Ma4},
and on the pruned descendant tree containing the decisive bifurcation in Figure
\ref{fig:PrunedTree5x5}.

In each step,
a parent \(\pi(G)\) gives rise to several children \(G\),
and the TTT
\(\tau(G)\ge\tau(\pi(G))\)
is isotonic with respect to the partial order
\(G>\pi(G)\)
induced by the tree edges
\(G\to\pi(G)\),
wheras the TKT
\(\varkappa(G)\le\varkappa(\pi(G))\)
is antitonic,
by the \textit{fundamental monotony result} in Theorem
\ref{thm:Monotony}.

The \textit{pruning} process will run parallel to the recursive tree construction
in the following way.
A vertex with a first TTT component
\(\tau_1>\lbrack 25,5,5,5\rbrack\)
or with a higher TTT component
\(\tau_i>\lbrack 5,5\rbrack\), for some \(2\le i\le 6\),
must be discarded together with all its descendants,
since their TTT \(\tau\) will contain an inadequate component.
Similarly, a vertex having any \textit{forbidden} TKT \(\varkappa\)
different from the only two \textit{admissible} TKTs,
\(\varkappa\sim (1,0^5)\) and the biggest possible \(\varkappa=(0^6)\),
must be cancelled together with the entire set of its descendants,
since their TKT \(\varkappa\) will also be forbidden.

An important \textit{additional filter} must be used to eliminate
the numerous metabelian \(5\)-groups of maximal class with positive defect of commutativity, \(k\ge 1\),
although they share the common admissible TKT \(\varkappa=(0^6)\), by
\cite[Thm. 2.5.(3), p. 479]{Ma2}.
However, all descendants of a parent with \(k\ge 1\) will also have \(k\ge 1\),
by item (3) of Proposition
\ref{prp:Inheritance},
Formula
\eqref{eqn:InheritanceDefect},
and a metabelian group of maximal class with \(k\ge 1\) reveals
a \textit{total stabilization} of all components of the restricted Artin pattern,
according to Theorem
\cite[Thm. 6.2.(2), p. 92]{Ma11}.
With the aid of Theorem
\ref{thm:RstrAPofCompleteCover},
Formula
\eqref{eqn:RstrAPofCompleteCover},
we conclude that non-metabelian \(5\)-groups \(G\) of any coclass,
whose metabelianization \(G/G^{\prime\prime}\) is of maximal class with \(k\ge 1\),
will also have the TKT \(\varkappa=(0^6)\) (and not the desired TKT \(\varkappa\sim (1,0^5)\))
and, due to \(k\ge 1\), the inadequate first TTT component which was determined in Theorem
\ref{thm:Polarization}.
(A metabelian \(p\)-group of maximal class with order \(p^m\) has defect \(k\ge 1\)
if and only if \(\lvert\tau_1\rvert<m-1\).)

After these preliminary considerations,
we execute the actual steps for finding \textit{successive approximations}
in form of \textit{class-\(c\) quotients} \(G/\gamma_{c+1}{G}\)
for the \(5\)-class tower group
\(G:=\mathrm{G}_5^\infty{K}\)
of the field \(K\).
The steps are summarized in Table
\ref{tbl:SuccessiveApproximation}.

\begin{itemize}

\item
The abelian root
\(R=\langle 5^2,2\rangle\simeq C_5\times C_5\)
is the class-\(1\) quotient \(G/\gamma_2{G}\) of \(G\),
as shown by Formula
\eqref{eqn:GeneratorRank}.
It has
\(\tau=\left(\lbrack 5\rbrack^6\right)\), \(\varkappa=(0^6)\),
and gives rise to the extra special \(5\)-groups as its \(2\) children, the capable
\(\langle 5^3,3\rangle\simeq G_0^3(0,0)\)
with
\(\tau=\left(\lbrack 5,5\rbrack^6\right)\), and admissible \(\varkappa=(0^6)\),
and the terminal
\(\langle 5^3,4\rangle\simeq G_0^3(0,1)\)
with
\(\tau\sim\left(\lbrack 5,5\rbrack,\lbrack 25\rbrack^5\right)\), and forbidden \(\varkappa=(1^6)\).\\
(For a real quadratic field \(K\), the group \(\langle 5^3,4\rangle\) cannot occur as \(\mathrm{G}_5^2{K}\)
for the additional reason that it is not a \(\sigma\)-group.)

\item
The compulsory next parent
\(\langle 5^3,3\rangle\simeq G_0^3(0,0)\)
is the class-\(2\) quotient \(G/\gamma_3{G}\) of \(G\)
and reveals a \textit{bifurcation}
\cite[\S\ 8, p. 168]{Ma6}
with
\(4\) metabelian children of step size \(1\)
\cite[Fig. 3.3, p. 425]{Ma4},
and \(12\) metabelian children of step size \(2\)
\cite[Fig. 3.8, p. 448]{Ma4}.

Among the former, we have
a capable vertex \(\langle 5^4,7\rangle\) with admissible \(\varkappa=(0^6)\),
a single leaf \(\langle 5^4,8\rangle\) with the desired \(\varkappa\sim (1,0^5)\)
but too small \(\tau\sim\left(\lbrack 5,5,5\rbrack,\lbrack 5,5\rbrack^5\right)\),
and two leaves \(\langle 5^4,9\vert 10\rangle\) with forbidden \(\varkappa\sim (2,0^5)\).

Among the latter,
which are of coclass \(2\),
the \(11\) groups \(\langle 5^5,4\ldots 14\rangle\) have forbidden TKTs,
and only the group \(\langle 5^5,3\rangle\) has admissible \(\varkappa=(0^6)\).
However, the last five components of the TTT \(\tau=\left(\lbrack 5,5,5\rbrack^6\right)\)
disqualify the group as a possible class-\(3\) quotient of \(G\).

\item
For the class-\(3\) quotient \(G/\gamma_4{G}\) of \(G\)
we have to take
\(\langle 5^4,7\rangle\simeq G_0^4(0,0)\)
which is the next parent.
It has \(9\) metabelian children, all of step size \(1\).
The capable vertex \(\langle 5^5,30\rangle\) has admissible \(\varkappa=(0^6)\).
The leaf \(\langle 5^5,31\rangle\) has the desired \(\varkappa\sim (1,0^5)\)
but too small \(\tau\sim\left(\lbrack 5,5,5,5\rbrack,\lbrack 5,5\rbrack^5\right)\).
The leaf \(\langle 5^5,32\rangle\) has forbidden \(\varkappa\sim (2,0^5)\), and
the \(6\) groups \(\langle 5^5,33\ldots 38\rangle\) have positive defect of commutativity \(k=1\).\\
(For a real quadratic field \(K\), the groups \(\langle 5^5,31\ldots 38\rangle\) cannot occur as \(\mathrm{G}_5^2{K}\)
for the additional reason that they are not \(\sigma\)-groups.)

\item
Now we arrive at the \textit{crucial bifurcation}
\cite[\S\ 8, p. 168]{Ma6}
of the mandatory next parent
\(\langle 5^5,30\rangle\simeq G_0^5(0,0)\)
which is the class-\(4\) quotient \(G/\gamma_5{G}\) of \(G\).
It has \(21\) children of step size \(1\) (\(13\) of them metabelian)
and \(115\) non-metabelian children of step size \(2\).

Among the former, we have
a capable vertex \(\langle 5^6,630\rangle\) with admissible \(\varkappa=(0^6)\),
\(4\) vertices \(\langle 5^6,631\ldots 634\rangle\) with forbidden \(\varkappa\sim (2,0^5)\),
a single group \(\langle 5^6,635\rangle\) with both,
the desired \(\varkappa\sim (1,0^5)\) and the desired \(\tau\sim\left(\lbrack 25,5,5,5\rbrack,\lbrack 5,5\rbrack^5\right)\),
and \(7\) groups \(\langle 5^6,636\ldots 642\rangle\) with \(k=1\).

Among the latter,
which are of coclass \(2\),
we have \(3\) capable vertices \(\langle 5^7,360\vert 372\vert 384\rangle\) with admissible \(\varkappa=(0^6)\),
\(5\) leaves \(\langle 5^7,361\vert 373\vert 374\vert 385\vert 386\rangle\) with both,
the desired \(\varkappa\sim (1,0^5)\) and the desired \(\tau\sim\left(\lbrack 25,5,5,5\rbrack,\lbrack 5,5\rbrack^5\right)\),
\(32\) vertices \(\langle 5^7,n\rangle\), where \(n\) is listed in Table
\ref{tbl:SuccessiveApproximation},
with forbidden \(\varkappa\sim (2,0^5)\),
and \(75\) vertices \(\langle 5^7,392\ldots 466\rangle\) with \(k\ge 1\).

\item
To make sure that \(\langle 5^6,635\rangle\simeq G_0^5(0,1)\)
is the unique possibility for the metabelianization \(G/G^{\prime\prime}\) of \(G\),
we have to check the next parent \(\langle 5^6,630\rangle\simeq G_0^5(0,0)\).
It has \(12\) children of step size \(1\) (\(9\) of them metabelian),
but either their \(\tau\sim\left(\lbrack 25,25,5,5\rbrack,\lbrack 5,5\rbrack^5\right)\) is too big or they have \(k=1\).
Thus, the break-off condition on the coclass-\(1\) tree is reached.

\item
It remains to prove that \(\langle 5^7,361\vert 373\vert 374\vert 385\vert 386\rangle\)
are the only \(5\) non-metabelian possibilities for \(G\).
For this purpose, we must check the roots \(\langle 5^7,360\vert 372\vert 384\rangle\)
of \(3\) non-metabelian coclass-\(2\) trees,
which possess \(14\), resp. \(20\), resp. \(20\), children of step size \(1\),
but either their \(\tau\sim\left(\lbrack 25,25,5,5\rbrack,\lbrack 5,5\rbrack^5\right)\) is too big or they have \(k\ge 1\).

\end{itemize}

\noindent
The proof for a real quadratic field \(K\) will be finished in Theorem
\ref{thm:ShafarevichCover}.\qedhere
\end{proof}

\renewcommand{\arraystretch}{1.1}

\begin{table}[ht]
\caption{Descendants of class-\(c\) quotients \(G/\gamma_{c+1}{G}\) of the \(5\)-class tower group \(G\)}
\label{tbl:SuccessiveApproximation}
\begin{center}
\begin{tabular}{|r|l|r|c||c|r|c|r|c|r|c|r|c|}
\hline
\multicolumn{4}{|c||}{\(G/\gamma_{c+1}{G}\)} & & \multicolumn{2}{|c|}{\(\varkappa=(0^6)\)} & \multicolumn{2}{|c|}{\(\varkappa\sim (1,0^5)\)} & \multicolumn{2}{|c|}{\(\varkappa\sim (2,0^5)\)} & \multicolumn{2}{|c|}{\(\varkappa=(0^6)\)} \\
\multicolumn{3}{|c|}{}    & \(\#\) of       & step   & \multicolumn{2}{|c|}{\(k=0\)} & \multicolumn{2}{|c|}{\(k=0\)} & \multicolumn{2}{|c|}{\(k=0\)} & \multicolumn{2}{|c|}{\(k=1\)}\\
 \(c\) & ord     &      id & descendants     & size   & \(\#\) &       id & \(\#\) &               id & \(\#\)  &                id & \(\#\)  &                  id \\
\hline
 \(1\) & \(5^2\) &   \(2\) & \(2/1\)         &  \(1\) &  \(1\) & \(3\)    &        &                  &         &                   &         &                     \\
\hline
 \(2\) & \(5^3\) &   \(3\) & \(4/1;12/6\)    &  \(1\) &  \(1\) & \(7\)    &  \(1\) & \(8\)            &   \(2\) & \(9\vert 10\)     &         &                     \\
       &         &         &                 &  \(2\) &  \(1\) & \(3\)    &        &                  &         &                   &         &                     \\
\hline
 \(3\) & \(5^4\) &   \(7\) & \(9/2\)         &  \(1\) &  \(1\) & \(30\)   &  \(1\) & \(31\)           &   \(1\) & \(32\)            &   \(6\) & \(33\ldots 38\)     \\
\hline
 \(4\) & \(5^5\) &  \(30\) & \(21/2;115/13\) &  \(1\) &  \(1\) & \(630\)  &  \(1\) & \(635\)          &   \(4\) & \(631\ldots 634\) &  \(15\) & \(636\ldots 650\)   \\
       &         &         &                 &  \(2\) &  \(3\) & \(360\)  &  \(5\) & \(361\)          &  \(32\) & \(352\ldots 359\) &  \(75\) & \(392\ldots 466\)   \\
       &         &         &                 &        &        & \(372\)  &        & \(373\vert 374\) &         & \(362\ldots 371\) &         &                     \\
       &         &         &                 &        &        & \(384\)  &        & \(385\vert 386\) &         & \(375\ldots 383\) &         &                     \\
       &         &         &                 &        &        &          &        &                  &         & \(387\ldots 391\) &         &                     \\
\hline
\hline
       & \(5^6\) & \(630\) & \(12/2\)        &  \(1\) &  \(1\) & \(1287\) &  \(1\) & \(1288\)         &   \(1\) & \(1286\)          &   \(9\) & \(1289\ldots 1297\) \\
\hline
       & \(5^7\) & \(360\) & \(14/2\)        &  \(1\) &  \(1\) & \(2\)    &  \(1\) & \(4\)            &   \(2\) & \(1\vert 3\)      &  \(10\) & \(5\ldots 14\)      \\
\hline
       & \(5^7\) & \(372\) & \(20/2\)        &  \(1\) &  \(1\) & \(2\)    &  \(1\) & \(5\)            &   \(3\) &\(1\vert 3\vert 4\)&  \(15\) & \(6\ldots 20\)      \\
\hline
       & \(5^7\) & \(384\) & \(20/2\)        &  \(1\) &  \(1\) & \(2\)    &  \(1\) & \(5\)            &   \(3\) &\(1\vert 3\vert 4\)&  \(15\) & \(6\ldots 20\)      \\
\hline
\end{tabular}
\end{center}
\end{table}

\begin{remark}
\label{rmk:Proofs}
It is very important to point out that, for the sake of homogeneity, we have conducted both,
the search for the metabelianization \(\mathfrak{G}=G/G^{\prime\prime}\) of the \(5\)-class tower group \(G\),
and the search for \(G\) itself,
within the frame of the \(p\)-group generation algorithm.
In two previous papers,
we have developed the tools for finding the second \(5\)-class group \(\mathfrak{G}\) much more directly:
The metabelian skeleton of the coclass tree \(\mathcal{T}^1(\langle 25,2\rangle)\) in
\cite[Fig. 3.3, p. 425]{Ma4}
is embedded within a sort of coordinate system
with the TKT \(\varkappa\) as its horizontal axis
and the polarized first TTT component \(\tau_1\) as its vertical axis.
According to
\cite[Thm. 2.5.(3), p. 479]{Ma2},
the TKT \(\varkappa(\mathfrak{G})\sim (1,0^5)\) uniquely determines the parameters \(w=1\), \(z=0\), and \(k=0\)
of the metabelian \(5\)-group \(\mathfrak{G}\simeq G_a^m(z,w)=G_0^m(0,1)\),
since \(k=0\) if and only if \(a\) is the empty family.
As second input data we do not even need the exact TTT \(\tau(\mathfrak{G})=\left(\lbrack 25,5,5,5\rbrack,\lbrack 5,5\rbrack^5\right)\).
The order of the polarized first TTT component \(\lvert\tau_1\rvert=25\cdot 5\cdot 5\cdot 5=5^5\) suffices, since according to
\cite[Thm. 3.1.(3), p. 475]{Ma1},
the relation \(\lvert\tau_1\rvert=\lvert M_1/M_1^\prime\rvert=5^{m-k-1}=5^{m-1}\) uniquely determines the index of nilpotency as \(m=6\).
Therefore, we have \(\mathfrak{G}\simeq G_0^6(0,1)\), and this is exactly the group \(\langle 5^6,635\rangle\).
\end{remark}



\section{Relation rank and generator rank}
\label{s:RelationRank}

\begin{theorem}
\label{thm:RelationRank}

(Shafarevich)
Let \(p\) be a prime number
and denote by \(\zeta\) a primitive \(p\)th root of unity.
Let \(K\) be a number field with signature \((r_1,r_2)\) and
torsionfree Dirichlet unit rank \(r=r_1+r_2-1\),
and let \(S\) be a finite set of non-archimedean or real archimedean places of \(K\).
Assume that no place in \(S\) divides \(p\).

Then the relation rank \(d_2{G_S}:=\dim_{\mathbb{F}_p}H^2(G_S,\mathbb{F}_p)\) of
the Galois group \(G_S:=\mathrm{Gal}(K_S\vert K)\) of
the maximal pro-\(p\) extension \(K_S\) of \(K\)
which is unramified outside of \(S\)
is bounded from above by

\begin{equation}
\label{eqn:RelationRank}
d_2{G_S}\le
\begin{cases}
d_1{G_S}+r   & \text{ if } S\ne\emptyset \text { or } \zeta\notin K, \\
d_1{G_S}+r+1 & \text{ if } S=\emptyset  \text { and } \zeta\in K,
\end{cases}
\end{equation}

\noindent
where \(d_1{G_S}:=\dim_{\mathbb{F}_p}H^1(G_S,\mathbb{F}_p)\) denotes the generator rank of \(G_S\).

\end{theorem}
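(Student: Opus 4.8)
The plan is to reinterpret the two invariants cohomologically and then to compute $H^2(G_S,\mathbb{F}_p)$ by global duality. By the Burnside basis theorem for pro-$p$ groups, the generator rank already equals $d_1 G_S=\dim_{\mathbb{F}_p}H^1(G_S,\mathbb{F}_p)$, and by the standard theory of minimal presentations $1\to R\to F\to G_S\to 1$ of a pro-$p$ group (with $F$ free on $d_1 G_S$ generators), the relation rank equals $d_2 G_S=\dim_{\mathbb{F}_p}H^2(G_S,\mathbb{F}_p)$. The five-term inflation--restriction sequence attached to this presentation, together with $H^2(F,\mathbb{F}_p)=0$, identifies $H^2(G_S,\mathbb{F}_p)$ with the coinvariants of the abelianized relation module, so the whole problem is reduced to bounding $\dim_{\mathbb{F}_p}H^2(G_S,\mathbb{F}_p)$ in arithmetic terms.

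The arithmetic input enters through Galois cohomology with restricted ramification. First I would pass to the Poitou--Tate nine-term exact sequence for the pair of dual finite $G_S$-modules $(\mathbb{F}_p,\mu_p)$, where $\mu_p$ denotes the group of $p$th roots of unity. Its tail
\[
H^2(G_S,\mathbb{F}_p)\to\bigoplus_{v\in S}H^2(G_v,\mathbb{F}_p)\to H^0(G_S,\mu_p)^\vee\to 0
\]
together with its middle terms express $H^2(G_S,\mathbb{F}_p)$ through the dual Shafarevich--Tate group, the local summands at the places $v\in S$, and the global term $H^0(G_S,\mu_p)^\vee=\mu_p(K)^\vee$ (Pontryagin dual). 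The crucial structural point is that, since no place of $S$ divides $p$, the extension $K_S\vert K$ is unramified above $p$, so the ordinary global Euler--Poincar\'e characteristic formula of Tate does \emph{not} apply to $G_S$ directly. Instead the local conditions at the primes above $p$ must be treated as the unramified (hence trivial) conditions, and the Kummer-theoretic description of $H^1(G_S,\mu_p)$ in terms of the $S$-units and the $S$-class group supplies the remaining terms.

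Collecting these contributions, the Dirichlet unit theorem produces the summand $r=r_1+r_2-1$ as the free rank of the global units feeding into the $\mu_p$-cohomology, while the term $H^0(G_S,\mu_p)=\mu_p(K)$ is non-trivial exactly when $\zeta\in K$, which accounts for the potential extra $+1$. When $S\ne\emptyset$, the local terms at the finite places $v\in S$ contribute compensating dimensions that cancel this $\mu_p(K)^\vee$ contribution against the $S$-unit surplus, which is why the sharper bound $d_1 G_S+r+1$ is needed only in the exceptional case $S=\emptyset$ together with $\zeta\in K$. Finally, passing from the exact count to the stated inequality amounts to discarding the non-negative dimension of the dual Shafarevich--Tate group. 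I expect the main obstacle to be precisely the point where Tate's Euler-characteristic formula fails: one must carefully bookkeep the unramified local conditions above $p$ and isolate the exact role of $\mu_p(K)$ in the duality, since a slip there would shift the dichotomy between the two cases of the bound.
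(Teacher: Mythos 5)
Your reduction of \(d_1{G_S}\) and \(d_2{G_S}\) to \(\dim_{\mathbb{F}_p}H^1\) and \(\dim_{\mathbb{F}_p}H^2\) via a minimal presentation is correct, but the arithmetic core of your argument fails. The Poitou--Tate nine-term sequence for the dual pair \((\mathbb{F}_p,\mu_p)\) is only available when \(S\) contains all archimedean places and all places dividing \(p\); here the hypothesis is precisely \(S\cap S_p=\emptyset\), so that sequence does not exist for \(G_S\) with \(\mathbb{F}_p\)-coefficients --- this is not a technicality but the defining difficulty of the tame case. Worse, when \(\zeta\notin K\) the module \(\mu_p\) is not even a \(G_S\)-module: \(G_S\) is pro-\(p\), while \(\mathrm{Gal}(K(\zeta)\vert K)\) has order dividing \(p-1>1\), so the cyclotomic action cannot factor through \(G_S\); to use any duality one must pass to the full (non-pro-\(p\)) group unramified outside \(S\) and return to \(G_S\) by the inflation map on \(H^2\), whose injectivity for a maximal pro-\(p\) quotient is a lemma your sketch never isolates. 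Your own caveat --- that ``the local conditions at the primes above \(p\) must be treated as the unramified conditions'' --- is not a repair but a paraphrase of what has to be proved; making it precise is exactly the content of the Shafarevich--Koch theorem that in the tame case there is an exact sequence
\begin{equation*}
0\longrightarrow\left(V_S/K^{\times p}\right)^\vee\longrightarrow H^2(G_S,\mathbb{F}_p)\longrightarrow\bigoplus_{v\in S}H^2(G_v,\mathbb{F}_p)\longrightarrow\mu_p(K)^\vee,
\end{equation*}
where \(V_S:=\lbrace a\in K^\times\mid a\in K_v^{\times p}\text{ for all }v\in S,\ v(a)\equiv 0\ (\mathrm{mod}\ p)\text{ for all }v\notin S\rbrace\) and \(G_v\) are the local pro-\(p\) groups. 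This sequence does not follow from formal duality; it needs genuine class field theory. Once it is available, the bound is a pure dimension count using the generator-rank formula \(d_1{G_S}=\sum_{v\in S}\delta_v-(r+\delta)+\dim_{\mathbb{F}_p}V_S/K^{\times p}\) (for finite \(v\); here \(\delta=1\) iff \(\zeta\in K\), \(\delta_v=1\) iff \(\zeta\in K_v\)) together with Dirichlet's unit theorem, which gives \(\dim_{\mathbb{F}_p}V_\emptyset/K^{\times p}=r+\delta+\dim_{\mathbb{F}_p}\mathrm{Cl}(K)\lbrack p\rbrack\); the case distinction in \eqref{eqn:RelationRank} comes from the reciprocity map onto \(\mu_p(K)^\vee\), which improves the bound by \(1\) exactly when \(S\ne\emptyset\) and \(\zeta\in K\). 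Note also that your bookkeeping inverts the roles: the Shafarevich--Tate-type kernel is not ``discarded'' to obtain the inequality; it is the main term, dual to \(V_S/K^{\times p}\), and it is this term that carries the unit rank \(r+\delta\). Finally, the obvious attempted repair --- enlarging \(S\) to \(S\cup S_p\cup S_\infty\), applying Poitou--Tate there, and then imposing relations killing inertia above \(p\) --- introduces on the order of \(\lbrack K:\mathbb{Q}\rbrack\) extra relations and cannot recover the sharp bound \(r=r_1+r_2-1\).

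For comparison: the paper contains no proof of this theorem at all; it simply cites Shafarevich's original article \cite[Thm.~6, \((18^\prime)\)]{Sh} and records that the formula there carries a misprint corrected in \cite[Thm.~5.5, p.~28]{Ma10}. So attempting an actual proof, as you do, necessarily goes beyond the paper; your outline follows the right family of ideas (the modern cohomological treatment of Koch and Neukirch--Schmidt--Wingberg), but as written it assumes in the tame case the very duality statement whose proof is the substance of the theorem.
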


\begin{proof}
The original statement in
\cite[Thm. 6, \((18^\prime)\)]{Sh}
contained a serious misprint
which was corrected in
\cite[Thm. 5.5, p. 28]{Ma10}.
\end{proof}



\section{Covers and Shafarevich covers}
\label{s:ShafarevichCover}

\noindent
We begin with a purely group theoretic concept,
which aids in getting an overview of all extensions of a particular kind
of an assigned \(p\)-group.

\begin{definition}
\label{dfn:Cover}
Let \(p\) be a prime and
\(\mathfrak{G}\) be a finite \textit{metabelian} \(p\)-group.
By the \textit{cover} of \(\mathfrak{G}\) we understand
the set of all (isomorphism classes of) finite \(p\)-groups
whose second derived quotient is isomorphic to \(\mathfrak{G}\):

\begin{equation}
\label{eqn:Cover}
\mathrm{cov}(\mathfrak{G}):=\left\lbrace G\mid \mathrm{ord}(G)<\infty,\ G/G^{\prime\prime}\simeq\mathfrak{G}\right\rbrace.
\end{equation}

\noindent
By eliminating the finiteness condition, we obtain the \textit{complete cover} of \(\mathfrak{G}\),

\begin{equation}
\label{eqn:CompleteCover}
\mathrm{cov}_c(\mathfrak{G}):=\left\lbrace G\mid G/G^{\prime\prime}\simeq\mathfrak{G}\right\rbrace.
\end{equation}

\end{definition}


\begin{remark}
\label{rmk:Cover}
The unique metabelian element of \(\mathrm{cov}(\mathfrak{G})\) is
the isomorphism class of \(\mathfrak{G}\) itself.
\end{remark}

\begin{theorem}
\label{thm:CoverMainResult}
The cover of the metabelian \(5\)-group \(\langle 15625,635\rangle\) consists of \(6\) elements,

\begin{equation}
\label{eqn:CoverMainResult}
\mathrm{cov}(\langle 15625,635\rangle)=
\biggl\lbrace\langle 15625,635\rangle\biggr\rbrace\bigcup\biggl\lbrace\langle 78125,n\rangle\mid n\in\lbrace 361, 373, 374, 385, 386\rbrace\biggr\rbrace.
\end{equation}

\end{theorem}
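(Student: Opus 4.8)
The plan is to separate the cover into its metabelian and its non-metabelian part and to treat the two by completely different means. By Remark \ref{rmk:Cover} the only metabelian member of \(\mathrm{cov}(\mathfrak{G})\), where \(\mathfrak{G}:=\langle 15625,635\rangle\), is \(\mathfrak{G}\) itself, so the entire problem reduces to locating the finite groups \(G\) with \(G/G''\simeq\mathfrak{G}\) and \(G''\ne 1\). Every such \(G\) satisfies \(G'''=1\), hence has derived length \(\mathrm{dl}(G)=3\) and abelianization \(G/G'\simeq\mathfrak{G}/\mathfrak{G}'\simeq\lbrack 5,5\rbrack\). The overall idea is to show that the prescribed restricted Artin pattern of \(\mathfrak{G}\) already confines the whole cover to the six listed groups, and then to confirm directly that all six genuinely belong to it.

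For the inclusion of the five groups of order \(5^7\) I would verify, by a finite computation inside each concrete group \(\langle 78125,n\rangle\), that its second derived quotient is isomorphic to \(\langle 15625,635\rangle\). This is precisely the content of Corollary \ref{cor:MainTheorem} and is already witnessed by the construction in \S\ \ref{s:Proofs}, where these five vertices arise as the coclass-\(2\) children of step size \(2\) of \(\langle 5^5,30\rangle\) carrying both \(\varkappa\sim (1,0^5)\) and \(\tau\sim\left(\lbrack 25,5,5,5\rbrack,\lbrack 5,5\rbrack^5\right)\).

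The reverse inclusion is where the structural machinery of the paper is brought to bear. By Theorem \ref{thm:RstrAPofCompleteCover} every member of the complete cover shares the restricted Artin pattern of its metabelianization, so any \(G\in\mathrm{cov}(\mathfrak{G})\) has the fixed pattern \(\varkappa(G)\sim (1,0^5)\) and \(\tau(G)\sim\left(\lbrack 25,5,5,5\rbrack,\lbrack 5,5\rbrack^5\right)\). Since \(G/G'\simeq\lbrack 5,5\rbrack\), iterated application of the parent operator places \(G\) in the descendant tree \(\mathcal{T}(R)\) with root \(R=\langle 5^2,2\rangle\) already traversed in \S\ \ref{s:Proofs}. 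I would then rerun the same pruning: by the isotonicity of \(\tau\) and the antitonicity of \(\varkappa\) in Theorem \ref{thm:Monotony}, any vertex whose first component strictly exceeds \(\lbrack 25,5,5,5\rbrack\), or whose transfer kernel type avoids both admissible values \((1,0^5)\) and \((0^6)\), is deleted together with all of its descendants; and by item (3) of Proposition \ref{prp:Inheritance} combined with Theorem \ref{thm:Polarization}, every vertex of positive defect \(k\ge 1\) is excluded, since there \(\lvert\tau_1\rvert=5^{m-k-1}<5^{m-1}\) contradicts the required \(\lvert\tau_1\rvert=5^5\). Reading off Table \ref{tbl:SuccessiveApproximation}, the only vertices surviving with the prescribed pattern are \(\langle 5^6,635\rangle\) and the five leaves \(\langle 5^7,361\vert 373\vert 374\vert 385\vert 386\rangle\), which confines \(\mathrm{cov}(\mathfrak{G})\) to these six groups.

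The step I expect to be the main obstacle is the \emph{exhaustiveness} of this search, that is, ruling out cover members of order larger than \(5^7\) or of coclass exceeding \(2\). The decisive tool is again the isotonicity of \(\tau\) from Theorem \ref{thm:Monotony}. The group \(\mathfrak{G}\) and the five groups of order \(5^7\) carry the prescribed pattern yet possess no pattern-preserving descendants, so no larger cover member descends from them. The remaining capability in the relevant region is carried by vertices whose transfer kernel type is the full \((0^6)\), which are therefore not themselves in the cover; one must check that passing to their children never restores the pattern of \(\mathfrak{G}\). This is exactly the verification, recorded at the close of \S\ \ref{s:Proofs}, that the coclass-\(1\) vertex \(\langle 5^6,630\rangle\) and the three capable coclass-\(2\) roots \(\langle 5^7,360\vert 372\vert 384\rangle\) each produce only children whose first component has grown to \(\lbrack 25,25,5,5\rbrack\) or whose defect is \(k\ge 1\). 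Since isotonicity forbids \(\tau_1\) from ever returning to \(\lbrack 25,5,5,5\rbrack\) once it has increased, the break-off condition is met on every branch, the cover cannot extend beyond the six groups, and the two inclusions together give the asserted equality.
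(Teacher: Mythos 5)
Your overall strategy is the same as the paper's: the paper proves Theorem \ref{thm:CoverMainResult} by combining the explicit presentations \eqref{eqn:pcPresentations} and \eqref{eqn:RelatorWord} (setting \(u_5:=1\) exhibits \(\langle 15625,635\rangle\) as the second derived quotient of each \(\langle 78125,n\rangle\), which is your forward inclusion, established there by direct computation rather than by appeal to Corollary \ref{cor:MainTheorem} --- note that citing the Corollary here would be circular, since its proof rests on the cover theorems) with the pruned tree search of \S\ \ref{s:Proofs} (your reverse inclusion). The splitting into metabelian and non-metabelian parts, the placement of every cover member in \(\mathcal{T}(\langle 25,2\rangle)\), and the pruning by monotony of \(\tau\) and antitony of \(\varkappa\) all match the paper.

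There is, however, a genuine gap in your justification of the defect filter, and it occurs at exactly the vertices where that filter is indispensable. You discard every vertex with \(k\ge 1\) because ``\(\lvert\tau_1\rvert=5^{m-k-1}<5^{m-1}\) contradicts the required \(\lvert\tau_1\rvert=5^5\)''. This is a non sequitur: the inequality compares \(\tau_1\) with the vertex's \emph{own} order, not with the fixed value \(5^5\), and for \(m=k+6\), i.e. \((m,k)\in\lbrace(7,1),(8,2),(9,3)\rbrace\), one has \(\lvert\tau_1\rvert=5^5\) exactly. Concretely, by Theorem \ref{thm:Polarization} the nine metabelian children of \(\langle 5^6,630\rangle\) with \(k=1\) (order \(5^7\), class \(6\); the entries \(1289\ldots 1297\) of Table \ref{tbl:SuccessiveApproximation}) have \(\tau_1=A(5,6-1)=\lbrack 25,5,5,5\rbrack\), precisely the prescribed polarized component, so your filter does not remove them; nor does your isotonicity argument (``\(\tau_1\) cannot return once it has increased'') close off their subtrees, because \(\tau_1\) never increased along those edges. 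The same defect occurs for the \(k\ge 1\) children of the coclass-\(2\) roots \(\langle 5^7,360\vert 372\vert 384\rangle\). What actually excludes these branches in the paper is the transfer \emph{kernel} type, not the target type: a metabelian \(5\)-group of maximal class with \(k\ge 1\) has \(\varkappa=(0^6)\) by \cite[Thm. 2.5.(3), p. 479]{Ma2}, this pattern is totally stabilized along all descendants by \cite[Thm. 6.2.(2), p. 92]{Ma11} together with Proposition \ref{prp:Inheritance}(3), and Theorem \ref{thm:RstrAPofCompleteCover} transfers it to every non-metabelian group whose metabelianization has \(k\ge 1\); since any member of \(\mathrm{cov}(\langle 15625,635\rangle)\) must have \(\varkappa\sim(1,0^5)\), no such branch can contain a cover element. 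Until you replace your \(\tau_1\)-inequality by this TKT/stabilization argument (or by the equally valid observation that \(k\) itself is an invariant of the metabelianization and \(k(\langle 15625,635\rangle)=0\)), the subtrees above the positive-defect vertices of orders \(5^7\) and \(5^8\) remain unexplored and the exhaustiveness of your search --- hence the reverse inclusion --- is not established.
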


\begin{proof}
This is a consequence of the presentations in Formulas
\eqref{eqn:pcPresentations}
and
\eqref{eqn:RelatorWord}
together with
\S\
\ref{s:Proofs}.
\end{proof}


\begin{remark}
\label{rmk:CompleteCover}
We shall not make essential use of the complete cover in this article.
The only reason for its introduction is
the possibility to express certain assertions in a more general way.
\end{remark}

\begin{theorem}
\label{thm:RstrAPofCompleteCover}
Let \(\mathfrak{G}\) be a finite metabelian \(p\)-group.
Then all elements of the complete cover of \(\mathfrak{G}\) share a common restricted Artin pattern:

\begin{equation}
\label{eqn:RstrAPofCompleteCover}
 \mathrm{AP}(G)=\mathrm{AP}(\mathfrak{G}), \quad \text{ for all } \quad G\in\mathrm{cov}_c(\mathfrak{G}).
\end{equation}

\end{theorem}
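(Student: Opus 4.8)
The plan is to study, for an arbitrary \(G\in\mathrm{cov}_c(\mathfrak{G})\), the canonical projection \(\pi:\,G\to G/G^{\prime\prime}=\mathfrak{G}\), and to show that it matches the two restricted Artin patterns entry by entry. First I would record the elementary containment \(G^{\prime\prime}\le M^\prime\) for every intermediate group \(G^\prime<M<G\): since \(M\supseteq G^\prime\) we have \(M^\prime=[M,M]\supseteq[G^\prime,G^\prime]=G^{\prime\prime}\). In particular \(G^{\prime\prime}\le G^\prime<M\), so \(M\mapsto\bar M:=M/G^{\prime\prime}\) is an inclusion-preserving bijection between the groups with \(G^\prime<M<G\) and the groups with \(\mathfrak{G}^\prime<\bar M<\mathfrak{G}\), which identifies the indexing families of \(\mathrm{AP}(G)\) and \(\mathrm{AP}(\mathfrak{G})\). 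Even when \(G\) is infinite, \(G/G^{\prime\prime}\simeq\mathfrak{G}\) is finite, so each such \(M\) has finite index in \(G\) and the Artin transfer \(T_{G,M}\) is well defined.

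For the TTT I would use the containment \(G^{\prime\prime}\le M^\prime\) once more. It gives \(\bar M^\prime=(M/G^{\prime\prime})^\prime=M^\prime/G^{\prime\prime}\), and the third isomorphism theorem then yields \(\bar M/\bar M^\prime\simeq M/M^\prime\). In fact the map \(\bar\pi:\,M/M^\prime\to\bar M/\bar M^\prime\) induced by \(\pi|_M\) is an isomorphism: its surjectivity follows from that of \(\pi|_M\), and its injectivity because the preimage of \(\bar M^\prime\) in \(M\) equals \(M^\prime\) (using \(G^{\prime\prime}\le M^\prime\)). Thus the abelian type invariants of the corresponding quotients agree componentwise, establishing \(\tau(G)=\tau(\mathfrak{G})\).

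The heart of the argument, and the step I expect to be the main obstacle, is the functoriality of the Artin transfer under \(\pi\). Because \(G^{\prime\prime}\le M\), the standard compatibility of the Verlagerung with quotients by a normal subgroup contained in the target subgroup furnishes a commutative square with \(\pi:\,G\to\mathfrak{G}\) on the left, \(T_{G,M}\) along the top, \(T_{\mathfrak{G},\bar M}\) along the bottom, and the induced map \(\bar\pi\) on the right, so that \(\bar\pi\circ T_{G,M}=T_{\mathfrak{G},\bar M}\circ\pi\). Verifying this square is the delicate point: it requires unwinding the definition of the transfer through a right transversal of \(M\) in \(G\), checking that such a transversal projects to a transversal of \(\bar M\) in \(\mathfrak{G}\), and tracking the Verlagerung product through \(\pi\); alternatively one may invoke the classical transfer-quotient lemma from the standard references. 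Since \(\bar\pi\) is an isomorphism by the previous paragraph, the square forces \(\ker T_{G,M}=\pi^{-1}\!\left(\ker T_{\mathfrak{G},\bar M}\right)\). As both kernels contain the respective commutator subgroups and \(\pi\) realizes the canonical identification \(G/G^\prime\simeq\mathfrak{G}/\mathfrak{G}^\prime\) of the abelianizations, the matched TKT entries coincide, whence \(\varkappa(G)=\varkappa(\mathfrak{G})\). Combining both components gives \(\mathrm{AP}(G)=\mathrm{AP}(\mathfrak{G})\) for every element of the complete cover.
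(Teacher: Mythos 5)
Your proof is correct, but it is genuinely different from what the paper does: the paper gives no internal argument for this theorem at all, disposing of it with a citation of the Main Theorem of \cite[Thm. 5.4, p. 86]{Ma11}. Your blind reconstruction supplies precisely the argument hidden behind that citation: the containment \(G^{\prime\prime}\le M^\prime\) for every \(G^\prime<M<G\), the induced bijection \(M\mapsto \bar{M}=M/G^{\prime\prime}\) between the index sets of the two patterns, the third-isomorphism identification \(M/M^\prime\simeq\bar{M}/\bar{M}^\prime\) for the TTT, and the compatibility of the Verlagerung with the quotient map \(\pi:\,G\to\mathfrak{G}\), which forces \(\ker T_{G,M}=\pi^{-1}\left(\ker T_{\mathfrak{G},\bar{M}}\right)\) for the TKT. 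It is worth noting how your square relates to Theorem \ref{thm:Monotony}: for a parent projection one only has \(\ker\pi\le G^\prime\), so the induced maps on transfer targets are merely epimorphisms and one gets only the inequalities \(\tau(G)\ge\tau(\pi{G})\) and \(\varkappa(G)\le\varkappa(\pi{G})\); here \(\ker\pi=G^{\prime\prime}\) lies inside \emph{every} \(M^\prime\), so those maps become isomorphisms and the inequalities collapse to equalities. The steps you flag as delicate are indeed the standard ones (a right transversal of \(M\) in \(G\) projects to one of \(\bar{M}\) in \(\mathfrak{G}\) because \(\ker\pi\le M\)), and your remark that finiteness of \(\mathfrak{G}=G/G^{\prime\prime}\) gives every \(M\) finite index --- so the transfers exist even for infinite \(G\) --- is exactly what makes the complete cover a legitimate domain for the statement. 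What your route buys is self-containedness; what the paper's route buys is brevity and consistency with the partial-order formalism of \cite{Ma11}, on which Section \ref{s:MonotonyDescendantTrees} already relies.
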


\begin{proof}
This is the Main Theorem of
\cite[Thm. 5.4, p. 86]{Ma11}.
\end{proof}


Motivated by the Shafarevich Theorem
\ref{thm:RelationRank},
we connect the group theoretic cover concept with arithmetical information,
which will be useful for identifying the \(p\)-class tower group
of an assigned number field.

\begin{definition}
\label{dfn:ShafarevichCover}
Let \(p\) be a prime and
\(K\) be a number field with
\(p\)-class rank \(\varrho:=d_1(\mathrm{Cl}_p(K))\),
torsionfree Dirichlet unit rank \(r\),
and the second \(p\)-class group \(\mathfrak{G}:=\mathrm{G}_p^2 K\).
By the \textit{Shafarevich cover},
\(\mathrm{cov}(\mathfrak{G},K)\),
of \(\mathfrak{G}\) \textit{with respect to} \(K\)
we understand the subset of \(\mathrm{cov}(\mathfrak{G})\) whose elements \(G\)
satisfy the following condition for their relation rank:

\begin{equation}
\label{eqn:ShafarevichCover}
\varrho\le d_2{G}\le\varrho+r+\vartheta,
\quad \text{ where } \quad
\vartheta:=
\begin{cases}
1 & \text{ if } K \text{ contains the } p\text{th roots of unity,} \\
0 & \text{ otherwise.} 
\end{cases}
\end{equation}

\end{definition}


\begin{theorem}
\label{thm:ShafarevichCover}
The Shafarevich cover of the metabelian \(5\)-group \(\langle 15625,635\rangle\)
with respect to a real quadratic field \(K\) of \(5\)-class rank \(\varrho=2\) consists of \(5\) elements,

\begin{equation}
\label{eqn:ShafarevichCoverMainResult}
\mathrm{cov}(\langle 15625,635\rangle,K)=
\biggl\lbrace\langle 78125,n\rangle\mid n\in\lbrace 361, 373, 374, 385, 386\rbrace\biggr\rbrace.
\end{equation}

\end{theorem}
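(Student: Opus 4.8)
The plan is to obtain the Shafarevich cover as the subset of the (already finite) cover of \(\langle 15625,635\rangle\) cut out by the relation-rank inequality of Definition \ref{dfn:ShafarevichCover}. By Theorem \ref{thm:CoverMainResult} the cover consists of exactly six groups, namely the metabelian group \(\mathfrak{G}=\langle 15625,635\rangle\) together with the five non-metabelian groups \(\langle 78125,n\rangle\) for \(n\in\{361,373,374,385,386\}\). Thus the whole problem reduces to testing, for each of these six candidates, whether its relation rank \(d_2 G\) falls into the admissible range \(\varrho\le d_2 G\le\varrho+r+\vartheta\) of \eqref{eqn:ShafarevichCover}.

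First I would pin down the three arithmetic parameters attached to a real quadratic field \(K=\mathbb{Q}(\sqrt{d})\). Its signature is \((r_1,r_2)=(2,0)\), so the torsionfree Dirichlet unit rank is \(r=r_1+r_2-1=1\); since \(K\) is real it cannot contain the non-real primitive fifth root of unity, hence \(\zeta\notin K\) and \(\vartheta=0\); and the hypothesis \(\mathrm{Cl}_5{K}\simeq\lbrack 5,5\rbrack\) forces the \(5\)-class rank \(\varrho=2\). Inserting these values, the two-sided bound in \eqref{eqn:ShafarevichCover} collapses to the tight interval \(2\le d_2 G\le 3\).

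Next I would feed in the relation ranks recorded in Corollary \ref{cor:MainTheorem}. The metabelian group \(\mathfrak{G}\) has \(d_2\mathfrak{G}=4\), which violates the upper bound \(d_2 G\le 3\), so \(\mathfrak{G}\) is removed from the cover; this is exactly the Shafarevich obstruction ruling out a purely metabelian, two-stage tower. Each of the five non-metabelian groups has \(d_2 G=3\), which satisfies \(2\le 3\le 3\); and since the lower bound \(\varrho=2\le d_2 G\) is met by all six groups in any case, the upper bound alone is decisive. Exactly the five groups \(\langle 78125,n\rangle\) therefore survive, which is the assertion \eqref{eqn:ShafarevichCoverMainResult}.

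The filtering itself is mechanical once the numbers are available, so the genuine content sits in the two relation-rank values \(d_2\mathfrak{G}=4\) and \(d_2 G=3\) on which the corollary relies. The hard part is to certify these from the explicit power-commutator presentations of the six cover elements, reading off \(d_2 G=\dim_{\mathbb{F}_5}H^2(G,\mathbb{F}_5)\) as the minimal number of defining relations; verifying that the five non-metabelian covers really do admit one relation fewer than their common metabelianization \(\mathfrak{G}\) is the delicate step, and it is precisely this drop from \(4\) to \(3\) that promotes the tower from length \(2\) to length \(3\).
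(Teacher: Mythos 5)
Your proposal is correct and takes essentially the same route as the paper's proof: compute \(\varrho=2\), \(r=1\), \(\vartheta=0\) so that the Shafarevich bound \eqref{eqn:ShafarevichCover} collapses to \(2\le d_2{G}\le 3\), then discard the metabelian group \(\langle 15625,635\rangle\) with \(d_2=4\) and retain the five non-metabelian groups \(\langle 78125,n\rangle\) with \(d_2=3\). The only caveat is that you should not source the relation ranks from Corollary \ref{cor:MainTheorem} (whose proof is itself completed by this theorem); the paper instead certifies them group-theoretically via the equality of relation rank and \(p\)-multiplicator rank, which is precisely the ``hard part'' you correctly flag at the end.
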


\begin{proof}
A real quadratic field has torsionfree unit rank \(r=0+2-1=1\)
and does not contain the primitive fifth roots of unity, i.e., \(\vartheta=0\).
The relation rank of a \(p\)-group is equal to its \(p\)-multiplicator rank
\cite[(26), p. 178]{Ma6}.
The metabelian \(5\)-group \(\mathfrak{G}=\langle 15625,635\rangle\) has relation rank \(4\),
whereas the relation rank of the non-metabelian \(5\)-groups
\(G=\langle 78125,n\rangle\) with \(n\in\lbrace 361, 373, 374, 385, 386\rbrace\)
is \(3\).
Consequently, the former satisfies the conflicting condition
\(d_2{\mathfrak{G}}=4>\varrho+r+\vartheta=3\),
whereas the latter enjoy the required property
\(2=\varrho\le d_2{G}=3\le\varrho+r+\vartheta=3\)
for a \(5\)-class tower group of \(K\).
\end{proof}

\begin{remark}
\label{rmk:Realization}
The arithmetical invariants \(\tau(K)\) and \(\varkappa(K)\)
of the real quadratic fields \(K\) in Example
\ref{exm:Realization}
were computed with the aid of MAGMA, version V2.21-11
\cite{BCP,BCFS,MAGMA},
under the LINUX operating system
on a machine with two XEON \(8\)-core CPUs and \(256\,\)GB RAM.
The unramified cyclic quintic extensions were constructed
by means of the class field routines written by Fieker
\cite{Fi}
for MAGMA.
\end{remark}




\section{Presentation and normal lattice of the \(5\)-tower groups}
\label{s:NormalLattice}

\begin{figure}[hb]
\caption{Pruned descendant tree of the abelian root \(\langle 25,2\rangle\)}
\label{fig:PrunedTree5x5}

\input{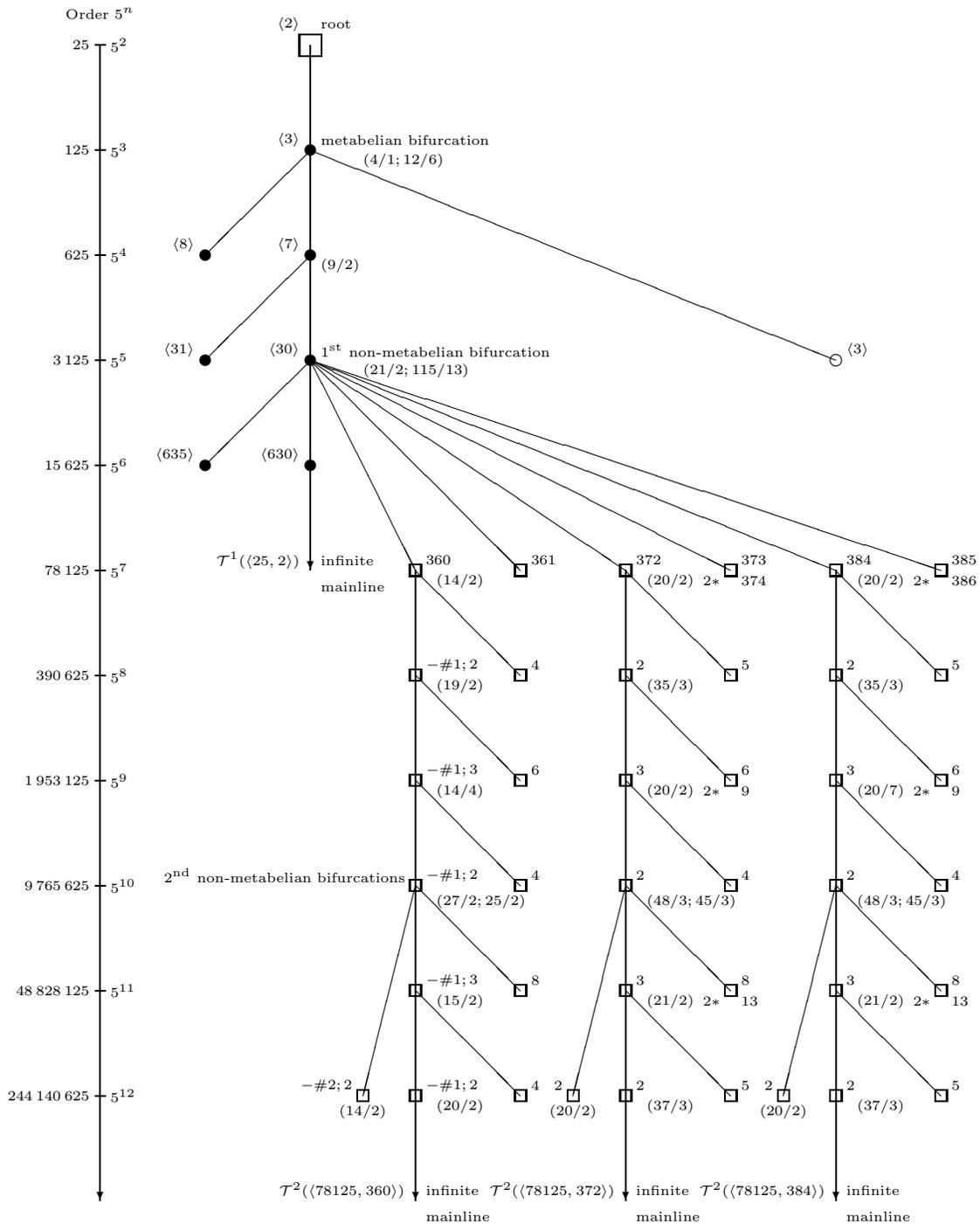}

\end{figure}

The diagram in Figure
\ref{fig:PrunedTree5x5}
visualizes the tree of \(5\)-groups \(G\)
with abelianization \(G/G^\prime\) of type \(\lbrack 5,5\rbrack\)
up to order \(5^{12}\),
\textit{pruned} by restriction to the transfer kernel types
a.1, \(\varkappa=(0,0,0,0,0,0)\), of vertices with defect \(k=0\) on infinite paths,
and a.2, \(\varkappa\sim (1,0,0,0,0,0)\), of leaves.

Groups of coclass \(\mathrm{cc}(G)=1\) are metabelian,
and groups of coclass \(\mathrm{cc}(G)=2\), except \(\langle 3125,3\rangle\), have derived length \(\mathrm{dl}(G)=3\).
The big contour square {\large \(\square\)} denotes the abelian root of the tree,
full discs {\large \(\bullet\)} denote metabelian groups with cyclic centre of order \(5\),
a contour circle {\large \(\circ\)} denotes a metabelian group with bicyclic centre of type \((5,5)\),
and smaller contour squares {\small \(\square\)} denote non-metabelian groups.
The symbol \(2^\ast\) means a batch of two siblings with common parent, where only one vertex is drawn

Up to order \(5^7\), vertices are labelled by their SmallGroups Library identifiers
\cite{BEO1,BEO2}
in angle brackets, where we omit the order, which is given on the left hand scale.
Starting with order \(5^8\), relative identifiers in the form \(-\#s;n\),
with step size \(s\) and counter \(n\), are used
as in the output produced by the ANUPQ package
\cite{GNO}
of GAP
\cite{GAP}
and MAGMA
\cite{MAGMA}.
For additional ease of identification,
we give the descendant numbers of some relevant capable vertices in the format
\(\left(N_1/C_1;\ldots;N_\nu/C_\nu\right)\) as described in
\cite[(34), p.180]{Ma6},
where \(\nu\) denotes the nuclear rank.
 


\begin{figure}[ht]
\caption{Normal lattice of the \(5\)-tower groups 
\(\langle 78125,n\rangle\) \((n\in\lbrace 361, 373, 374, 385, 386\rbrace)\)}
\label{fig:5TwrGrp3812377}


{\tiny

\setlength{\unitlength}{0.9cm}
\begin{picture}(15,17)(-5,2)

\put(-4,18.3){\makebox(0,0)[cb]{order \(5^n\)}}
\put(-4,16){\vector(0,1){2}}
\put(-4.2,16){\makebox(0,0)[rc]{\(78\,125\)}}
\put(-3.8,16){\makebox(0,0)[lc]{\(5^7\)}}
\put(-4.2,14){\makebox(0,0)[rc]{\(15\,625\)}}
\put(-3.8,14){\makebox(0,0)[lc]{\(5^6\)}}
\put(-4.2,12){\makebox(0,0)[rc]{\(3\,125\)}}
\put(-3.8,12){\makebox(0,0)[lc]{\(5^5\)}}
\put(-4.2,10){\makebox(0,0)[rc]{\(625\)}}
\put(-3.8,10){\makebox(0,0)[lc]{\(5^4\)}}
\put(-4.2,8){\makebox(0,0)[rc]{\(125\)}}
\put(-3.8,8){\makebox(0,0)[lc]{\(5^3\)}}
\put(-4.2,6){\makebox(0,0)[rc]{\(25\)}}
\put(-3.8,6){\makebox(0,0)[lc]{\(5^2\)}}
\put(-4.2,4){\makebox(0,0)[rc]{\(5\)}}
\put(-4.2,2){\makebox(0,0)[rc]{\(1\)}}
\multiput(-4.1,2)(0,2){8}{\line(1,0){0.2}}
\put(-4,2){\line(0,1){14}}

\multiput(8,16)(0,-4){2}{\line(1,0){2}}
\put(9,14.5){\vector(0,1){1.5}}
\put(9,14.25){\makebox(0,0)[cc]{first}}
\put(9,13.75){\makebox(0,0)[cc]{stage}}
\put(9,13.5){\vector(0,-1){1.5}}
\put(9,8.5){\vector(0,1){3.5}}
\put(9,8.25){\makebox(0,0)[cc]{second}}
\put(9,7.75){\makebox(0,0)[cc]{stage}}
\put(9,7.5){\vector(0,-1){3.5}}
\put(9,3.5){\vector(0,1){0.5}}
\put(9,3.25){\makebox(0,0)[cc]{third}}
\put(9,2.75){\makebox(0,0)[cc]{stage}}
\put(9,2.5){\vector(0,-1){0.5}}
\multiput(8,2)(0,2){2}{\line(1,0){2}}

\put(0.3,2){\makebox(0,0)[rc]{\(\zeta_0(G)\)}}
\put(0.5,2){\circle*{0.2}}
\put(0.7,2){\makebox(0,0)[lc]{\(\gamma_6(G)=1\)}}

\put(-1.2,4){\makebox(0,0)[rb]{\(G^{\prime\prime}\)}}
\put(-1.2,3.8){\makebox(0,0)[rt]{\(u_5\)}}
\put(-1,4){\circle*{0.2}}
\multiput(-0.4,4)(0.6,0){5}{\circle*{0.1}}
\put(2.2,4){\makebox(0,0)[lc]{\(s_5\)}}

\multiput(0.5,6)(-1.5,-2){2}{\line(3,-4){1.5}}
\multiput(0.5,6)(-0.9,-2.1){2}{\line(1,-2){0.9}}
\multiput(0.5,6)(-0.3,-2.1){2}{\line(1,-6){0.3}}
\multiput(0.5,6)(0.3,-2.1){2}{\line(-1,-6){0.3}}
\multiput(0.5,6)(0.9,-2.1){2}{\line(-1,-2){0.9}}
\multiput(0.5,6)(1.5,-2){2}{\line(-3,-4){1.5}}

\put(0.3,6){\makebox(0,0)[rc]{\(\zeta_1(G)\)}}
\put(0.5,6){\circle*{0.1}}
\put(0.7,6){\makebox(0,0)[lc]{\(\gamma_5(G)\)}}

\put(1.8,8){\makebox(0,0)[rc]{\(\zeta_2(G)\)}}
\put(2,8){\circle*{0.1}}
\put(2.2,8){\makebox(0,0)[lb]{\(\gamma_4(G)\)}}
\put(2.2,7.8){\makebox(0,0)[lt]{\(s_4\)}}

\put(2,8){\line(-3,-4){1.5}}

\put(3.3,10){\makebox(0,0)[rc]{\(\zeta_3(G)\)}}
\put(3.5,10){\circle*{0.1}}
\put(3.7,10){\makebox(0,0)[lb]{\(\gamma_3(G)\)}}
\put(3.7,9.8){\makebox(0,0)[lt]{\(s_3\)}}

\put(3.5,10){\line(-3,-4){1.5}}

\put(4.8,12){\makebox(0,0)[rc]{\(\zeta_4(G)\)}}
\put(5,12){\circle*{0.2}}
\put(5.2,12){\makebox(0,0)[lb]{\(\gamma_2(G)=G^\prime\)}}
\put(5.2,11.8){\makebox(0,0)[lt]{\(s_2\)}}

\put(5,12){\line(-3,-4){1.5}}

\put(3.3,14.1){\makebox(0,0)[rb]{\(M_1\)}}
\put(3.3,13.8){\makebox(0,0)[rt]{\(y\)}}
\put(4.1,14.1){\makebox(0,0)[cb]{\(M_6\)}}
\put(4.7,14.1){\makebox(0,0)[cb]{\(M_5\)}}
\multiput(3.5,14)(0.6,0){6}{\circle*{0.1}}
\put(5.3,14.1){\makebox(0,0)[cb]{\(M_4\)}}
\put(5.9,14.1){\makebox(0,0)[cb]{\(M_3\)}}
\put(6.7,14.1){\makebox(0,0)[lb]{\(M_2\)}}
\put(6.7,13.8){\makebox(0,0)[lt]{\(x\)}}

\multiput(5,16)(-1.5,-2){2}{\line(3,-4){1.5}}
\multiput(5,16)(-0.9,-2.1){2}{\line(1,-2){0.9}}
\multiput(5,16)(-0.3,-2.1){2}{\line(1,-6){0.3}}
\multiput(5,16)(0.3,-2.1){2}{\line(-1,-6){0.3}}
\multiput(5,16)(0.9,-2.1){2}{\line(-1,-2){0.9}}
\multiput(5,16)(1.5,-2){2}{\line(-3,-4){1.5}}
\put(4.8,16){\makebox(0,0)[rc]{\(\zeta_5(G)\)}}
\put(5,16){\circle*{0.2}}
\put(5.2,16){\makebox(0,0)[lc]{\(\gamma_1(G)=G\)}}

\end{picture}

}

\end{figure}
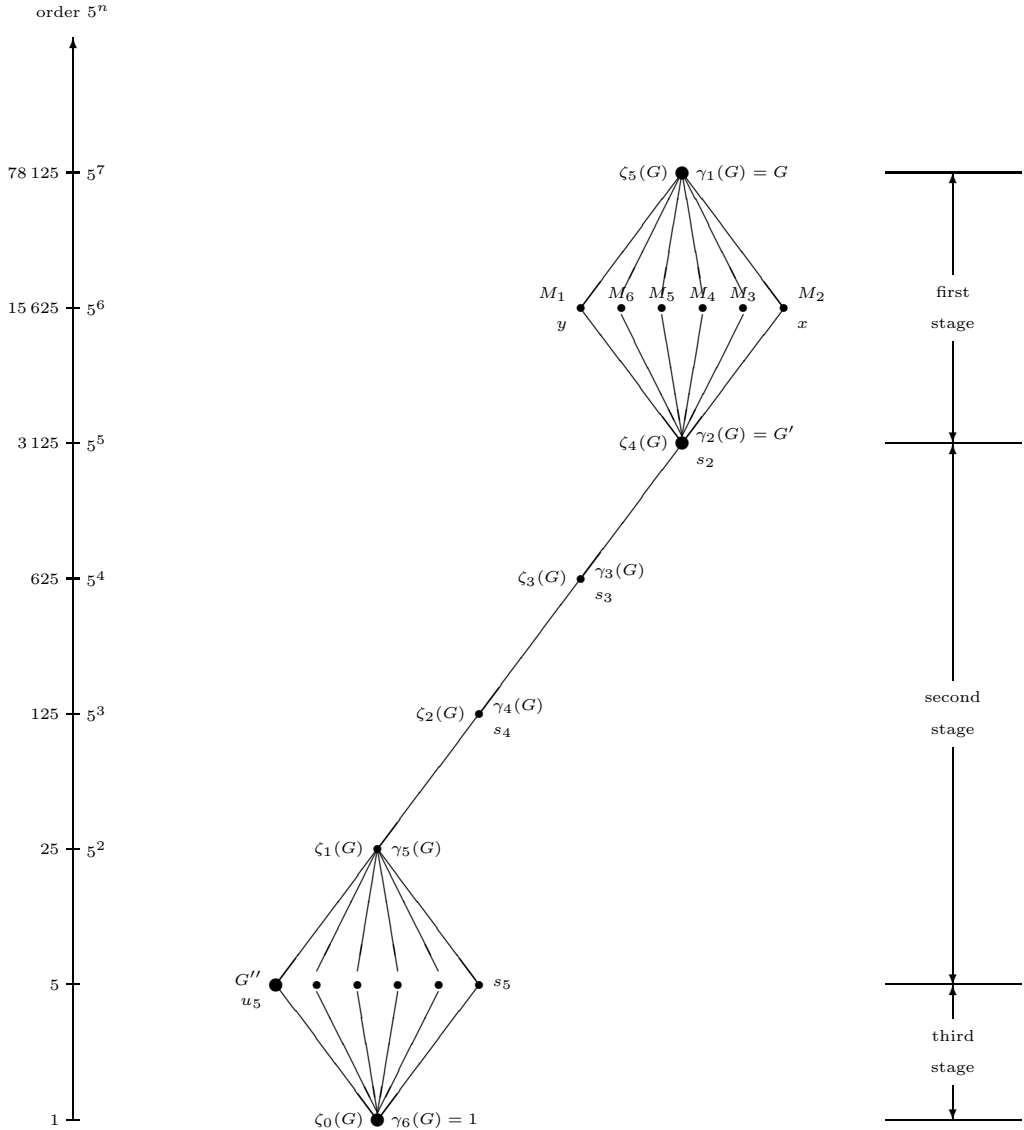

The diagram in Figure
\ref{fig:5TwrGrp3812377}
visualizes the lattice of all normal subgroups of the \(5\)-class tower groups
\(G_{n}:=\langle 78125,n\rangle\) with \(n\in\lbrace 361,373,374,385,386\rbrace\).
Polycyclic power commutator presentations of these groups are given by

\begin{equation}
\label{eqn:pcPresentations}
\begin{aligned}
G_{n}= & \langle\ x,y,s_2,s_3,s_4,s_5,u_5\ \mid \\
        & s_2=\lbrack y,x\rbrack,\ s_3=\lbrack s_2,x\rbrack,\ s_4=\lbrack s_3,x\rbrack,\ s_5=\lbrack s_4,x\rbrack, \\
        & x^5=W,\ y^5=s_5^4,\ u_5=\lbrack s_3,y\rbrack=\lbrack s_4,y\rbrack,\ \lbrack s_3,s_2\rbrack=u_5^4\ \rangle,
\end{aligned}
\end{equation}

\noindent
where the relator word \(W\) for the fifth power of the first generator \(x\) is decisive:

\begin{equation}
\label{eqn:RelatorWord}
W=
\begin{cases}
s_5        & \text{ if } n=361, \\
s_5u_5     & \text{ if } n=373, \\
s_5^2u_5   & \text{ if } n=374, \\
s_5u_5^2   & \text{ if } n=385, \\
s_5^2u_5^2 & \text{ if } n=386.
\end{cases}
\end{equation}

\noindent
The presentation for the second derived quotient of \(G_n\) is obtained by putting \(u_5:=1\).
Both resulting relator words \(W=s_5\) and \(W=s_5^2\) yield the same metabelian group
\(\langle 15625,635\rangle\).



\section{Acknowledgements}
\label{s:Acknowledgements}

\noindent
The author gratefully acknowledges that his research is supported
by the Austrian Science Fund (FWF): P 26008-N25.




\end{document}